\DeclareMathOperator*{\EE}{\mathbb{E}}
\DeclareMathOperator*{\VV}{\mathbb{V}}
\DeclareMathOperator{\Inf}{Inf}
\DeclareMathOperator{\im}{im}
\DeclareMathOperator{\cf}{cf}
\providecommand{\RR}{\mathbb{R}}
\providecommand{\cB}{\mathcal{B}}
\providecommand{\cS}{\mathcal{S}}
\providecommand{\cY}{\mathcal{Y}}
\providecommand{\pH}{\mathcal{H}}
\providecommand{\ppH}{\pH'}
\newif\ifcomments\commentsfalse
\providecommand{\todo}[1]{{\color{red} #1}}
\providecommand{\todo}[1]{}
\newtheorem{theorem}{Theorem}[section]
\newtheorem{proposition}[theorem]{Proposition}
\newtheorem{lemma}[theorem]{Lemma}
\theoremstyle{definition}
\newtheorem{definition}{Definition}[section]
\title{Orthogonal basis for functions over a slice of the Boolean hypercube}
\author{Yuval Filmus}
\affil{Technion --- Israel Institute of Technology, Haifa, Israel}
\begin{document}
\maketitle

\begin{abstract}
We present a simple, explicit orthogonal basis of eigenvectors for the Johnson and Kneser graphs, based on Young's orthogonal representation of the symmetric group.
Our basis can also be viewed as an orthogonal basis for the vector space of all functions over a slice of the Boolean hypercube (a set of the form $\{(x_1,\ldots,x_n) \in \{0,1\}^n : \sum_i x_i = k\}$), which refines the eigenspaces of the Johnson association scheme; our basis is orthogonal with repsect to \emph{any} exchangeable measure.
More concretely, our basis is an orthogonal basis for all multilinear polynomials $\RR^n \to \RR$ which are annihilated by the differential operator $\sum_i \partial/\partial x_i$. As an application of the last point of view, we show how to lift low-degree functions from a slice to the entire Boolean hypercube while maintaining properties such as expectation, variance and $L^2$-norm.

As an application of our basis, we streamline Wimmer's proof of Friedgut's theorem for the slice.
Friedgut's theorem, a fundamental result in the analysis of Boolean functions, states that a Boolean function on the Boolean hypercube with low total influence can be approximated by a Boolean junta (a function depending on a small number of coordinates). Wimmer generalized this result to slices of the Boolean hypercube, working mostly over the symmetric group, and utilizing properties of Young's orthogonal representation. Using our basis, we show how the entire argument can be carried out directly on the slice.
\end{abstract}

\section{Introduction} \label{sec:introduction}

Functions over the Boolean hypercube $\{0,1\}^n$ are often studied using the tools of \emph{Fourier analysis} (see O'Donnell's excellent recent monograph~\cite{ODonnell}). The crucial idea is to study functions from the point of view of the \emph{Fourier basis}, an orthonormal basis of functions over the Boolean hypercube. In this work, we consider functions on a different domain, a \emph{slice of the Boolean hypercube} $\binom{[n]}{k} = \{(x_1,\ldots,x_n) \in \{0,1\}^n : \sum_i x_i = k\}$; we always assume that $k \leq n/2$. Such functions arise naturally in coding theory, in the context of constant-weight codes, and have recently started appearing in theoretical computer science as well. In this work we provide an explicit orthogonal basis for the vector space of functions on a slice.

The slice has been studied in algebraic combinatorics under the name \emph{Johnson association scheme}, and in spectral graph theory in relation to the Johnson and Kneser graphs. Our basis is the analog of the Fourier basis for the scheme, and it refines the decomposition induced by the primitive idempotents.
Our basis is also an orthogonal basis for the eigenvectors of the Johnson and Kneser graphs, and any other graph belonging to the Bose--Mesner algebra of the Johnson association scheme. Such (weighted) graphs arise in Lov\'asz's proof of the Erd\H{o}s--Ko--Rado theorem~\cite{Lovasz}, and in Wilson's proof~\cite{Wilson} of a $t$-intersecting version of the theorem.

Despite the name, it is perhaps best to view the slice $\binom{[n]}{k}$ as the set of cosets of $S_k \times S_{n-k}$ inside $S_n$. This point of view suggests ``lifting'' an orthogonal basis from the symmetric group to the slice. Following Bannai and~Ito~\cite{BannaiIto}, the relevant representations of the symmetric group are those corresponding to partitions $(n-d)+d$ for $d \leq k$. Our basis arises from Young's orthogonal representation of the symmetric group. However, we present the basis and prove its properties without reference to the symmetric group at all.
One feature that is inherited from the symmetric group is the lack of a canonical basis: our basis relies on the ordering of the coordinates. 

Dunkl~\cite{Dunkl76} showed that the space of functions over the slice $\binom{[n]}{k}$ can be identified with the space of multilinear polynomials in $n$ variables $x_1,\ldots,x_n$ which are annihilated by the differential operator $\sum_{i=1}^n \partial/\partial x_i$; the input variable $x_i$ is an indicator variable for the event that $i$ belongs to the input set. Functions annihilated by this operator were termed \emph{harmonic} by Dunkl~\cite{Dunkl79}. Our basis forms an orthogonal basis for the space of harmonic multilinear polynomials for \emph{every} exchangeable measure (a measure invariant under the action of $S_n$). As a consequence, we show how to lift a low-degree function from the slice $\binom{[n]}{k}$ to the Boolean cube (under an appropriate measure) while maintaining some of its properties such as expectation, variance and $L^2$-norm.

\medskip

Wimmer~\cite{Wimmer} recently generalized a fundamental theorem of Friedgut~\cite{FriedgutJunta} from the Boolean hypercube to the slice. Friedgut's theorem, sometimes known as Friedgut's junta theorem, states that a Boolean function on the Boolean hypercube with low total influence is close to a Boolean junta (a function depending on a small number of variables). Although Wimmer's main theorem is a statement about functions on the slice, Wimmer lifts the given function to the symmetric group, where most of his argument takes place, exploiting essential properties of Young's orthogonal representation. Eventually, a hypercontractive property of the slice (due to Lee~and~Yau~\cite{LeeYau}) is invoked to complete the proof. As an application of our basis, we give a streamlined version of Wimmer's proof in which our basis replaces the appeal to the symmetric group and to Young's orthogonal representation.

\paragraph{Note added in proof} Since writing this paper, we have learned that the same basis has been constructed by Srinivasan~\cite{Srinivasan} in a beautiful paper. Srinivasan in fact constructs an extended basis for the entire Boolean cube $\{0,1\}^n$, which he identifies with the canonical Gelfand--Tsetlin basis~\cite{VO}, and shows that it is orthogonal with respect to all exchangeable measures. However, he provides neither an explicit description of the basis elements, nor even a canonical indexing scheme for the basis elements. Instead, he gives a recursive algorithm that constructs the basis. We believe that both approaches have merit.

\paragraph{Related work} Apart from Friedgut's theorem, several other classical results in Fourier analysis of Boolean functions have recently been generalized to the slice. O'Donnell and Wimmer~\cite{OW1,OW2} generalized the Kahn--Kalai--Linial theorem~\cite{KKL} to the slice, and deduced a robust version of the Kruskal--Katona theorem. Filmus~\cite{Filmus} generalized the Friedgut--Kalai--Naor theorem~\cite{FKN} to the slice.

Filmus, Kindler, Mossel and Wimmer~\cite{FKMW} and Filmus and Mossel~\cite{FM} generalized the invariance principle~\cite{MOO} to the slice. The invariance principle on the slice compares the behavior of low-degree harmonic multilinear polynomials on a slice $\binom{[n]}{k}$ and on the Boolean hypercube $\{0,1\}^n$ with respect to the corresponding product measure $\mu_{k/n}$. If the harmonic multilinear polynomial $f$ has degree $d$ and unit variance, then the invariance principle states that for any Lipschitz functional $\varphi$, \[
 |\EE_\sigma[\varphi(f)] - \EE_{\mu_p}[\varphi(f)]| = \tilde{O}\left(\sqrt{\frac{d}{\sqrt{p(1-p)n}}}\right),
\]
where $\sigma$ is the uniform distribution on the slice. The invariance principle can be used to lift results such as the Kindler--Safra theorem~\cite{Kindler,KindlerSafra} and Majority is Stablest from the Boolean hypercube to the slice.

Filmus and Mossel also give basis-free proofs for some of the results appearing in this paper. For example, they give a basis-free proof for the fact that the $L^2$-norm of a low-degree harmonic multilinear polynomial on the slice is similar to its $L^2$-norm on the Boolean cube under the corresponding product measure.

\paragraph{Synopsis} We describe the space of harmonic multilinear polynomials in Section~\ref{sec:polynomials}. Our basis is defined in Section~\ref{sec:basis}, in which we also compute the norms of the basis elements. We show that our basis forms a basis for functions on the slice in Section~\ref{sec:slice}, in which we also show how to lift low-degree functions from the slice to the entire hypercube, and explain why our basis is an orthogonal basis of eigenvectors for the Johnson and Kneser graphs. Section~\ref{sec:influences} and Section~\ref{sec:wimmer-friedgut} are devoted to the proof of the Wimmer--Friedgut theorem.

\paragraph{Notation} We use the notation $[n] = \{1,\ldots,n\}$. The cardinality of a set $S$ is denoted $|S|$. If $S \subseteq [n]$ and $\pi \in S_n$ (the symmetric group on $[n]$) then $S^\pi = \{ \pi(x) : x \in S \}$. We use the same notation in other similar circumstances. We compose permutations from right to left, so $\beta\alpha$ means apply $\alpha$ then $\beta$. We use the falling power notation: $n^{\underline{k}} = n(n-1)\cdots(n-k+1)$ (the number of terms is $k$). For example, $\binom{n}{k} = n^{\underline{k}}/k!$. A function is \emph{Boolean} if its values are in $\{0,1\}$.

\paragraph{Acknowledgements} The author thanks Karl Wimmer for helpful discussions and encouragement, and Qing Xiang and Rafael Plaza for pointing out Srinivasan's paper~\cite{Srinivasan}. The paper was written while the author was a member of the Institute for Advanced Study at Princeton, NJ.

We thank Bruno Loff for clarifying (in 2022!) the proof of Theorem~\ref{thm:orthogonal}.

This material is based upon work supported by the National Science Foundation under agreement No.~DMS-1128155. Any opinions, findings and conclusions or recommendations expressed in this material are those of the authors, and do not necessarily reflect the views of the National Science Foundation.

\section{Harmonic multilinear polynomials} \label{sec:polynomials}


We construct our basis as a basis for the vector space of \emph{harmonic multilinear polynomials} over $x_1,\ldots,x_n$, a notion defined below. For simplicity, we only consider polynomials over $\RR$, but the framework works just as well over any field of characteristic zero.

\begin{definition} \label{def:harmonic}
A polynomial $P \in \RR[x_1,\ldots,x_n]$ is \emph{multilinear} if all monomials of $P$ are squarefree (not divisible by any $x_i^2$).

A multilinear polynomial $P \in \RR[x_1,\ldots,x_n]$ is \emph{harmonic} if
\[ \sum_{i=1}^n \frac{\partial P}{\partial x_i} = 0. \]
We denote the vector space of harmonic multilinear polynomials over $x_1,\ldots,x_n$ by $\pH_n$.

The \emph{degree} of a non-zero multilinear polynomial is the maximal number of variables in any monomial.
We denote the subspace of $\pH_n$ consisting of polynomials of degree at most $d$ by $\pH_{n,d}$.

A polynomial has \emph{pure degree $d$} if all its monomials have degree $d$.
We denote the subspace of $\pH_n$ consisting of polynomials of degree exactly $d$ by $\ppH_{n,d}$.
\end{definition}

\todo{This definition of a harmonic function comes from the field of coinvariant spaces. A general function $f$ is \emph{harmonic} if for each $k \geq 1$,
\[ \sum_{i=1}^n \frac{\partial^k f}{\partial x_i^k} = 0. \]
In the case of multilinear polynomials, it is enough to consider $k = 1$.
We suspect that our basis can be extended to an orthogonal basis of general harmonic polynomials.}

The following lemma calculates the dimension of the vector space of harmonic multilinear polynomials of given degree.

\begin{lemma} \label{lem:dimension}
 All polynomials in $\pH_n$ have degree at most $n/2$. For $d \leq n/2$,
\[ \dim \pH_{n,d} = \binom{n}{d}, \quad \dim \ppH_{n,d} = \binom{n}{d} - \binom{n}{d-1}, \]
 where $\binom{n}{-1} = 0$.
\end{lemma}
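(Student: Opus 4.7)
The plan is to embed the differential operator $D := \sum_i \partial/\partial x_i$ into an $\mathfrak{sl}_2$-triple acting on the full space of multilinear polynomials and then to read off the dimensions from the representation theory of $\mathfrak{sl}_2$. Write $W_d$ for the $\binom{n}{d}$-dimensional space of multilinear polynomials of pure degree $d$, with the monomial basis $\{x_S : S \subseteq [n],\, |S| = d\}$. Because $D$ maps $W_d$ into $W_{d-1}$, any harmonic polynomial decomposes into harmonic pure-degree pieces, so $\ppH_{n,d} = W_d \cap \ker D$ and $\pH_{n,d} = \bigoplus_{d' \le d} \ppH_{n,d'}$; it therefore suffices to compute $\dim \ppH_{n,d}$.

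Next I would introduce the raising operator $U : W_d \to W_{d+1}$ defined on the basis by $U(x_S) = \sum_{i \notin S} x_{S \cup \{i\}}$, namely multiplication by $x_1 + \cdots + x_n$ followed by reduction modulo the relations $x_i^2 = 0$. Expanding both compositions on a basis monomial gives
\[ DU(x_S) = (n-d)\,x_S + R(x_S), \qquad UD(x_S) = d\,x_S + R(x_S), \]
where the ``swap'' term $R(x_S) = \sum_{i \notin S,\, j \in S} x_{(S \setminus \{j\}) \cup \{i\}}$ is identical in both expressions, so subtracting yields the key commutator identity $[D, U] = (n - 2d)\,I$ on $W_d$.

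Together with the degree shifts $D : W_d \to W_{d-1}$ and $U : W_d \to W_{d+1}$, this relation promotes $W = \bigoplus_d W_d$ to a finite-dimensional $\mathfrak{sl}_2$-module in which $W_d$ is the weight space of weight $n - 2d$. Decomposing $W$ into irreducible summands $L(k)$ of highest weight $k \ge 0$, one sees that the highest-weight vectors in $L(k)$ (which are precisely the harmonic polynomials annihilated by $D$) lie in $W_{(n-k)/2}$; since $k \ge 0$, this forces $d \le n/2$, proving the first claim. For $d \le n/2$, the multiplicity of $L(n - 2d)$ equals $\dim W_d - \dim W_{d-1} = \binom{n}{d} - \binom{n}{d-1}$, and this multiplicity is exactly $\dim \ppH_{n,d}$. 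Summing over $d' \le d$ telescopes to $\dim \pH_{n,d} = \binom{n}{d}$.

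The main obstacle is establishing the commutator identity $[D, U] = (n - 2d)I$, which reduces to a careful but routine bookkeeping: the off-diagonal ``swap'' contributions in $DU$ and $UD$ must be shown to coincide, while the diagonal pieces are extracted separately, counting respectively the $n - d$ ways to pick $i \notin S$ (the case $j = i$ in $DU$) and the $d$ ways to pick $j \in S$ (the case $i = j$ in $UD$). Once this identity is in hand, the rest of the argument is the standard structure theory of finite-dimensional $\mathfrak{sl}_2$-representations applied to the graded space $W$.
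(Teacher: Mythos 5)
Your proof is correct, but it takes a genuinely different route from the paper's. The paper argues elementarily: the degree bound $d \le n/2$ comes from observing that a pure-degree harmonic $P$ satisfies $P(\mathbf{y}+t\mathbf{1})=P(\mathbf{y})$, which forces every monomial of $P$ to be accompanied by a monomial supported on disjoint variables; and the dimension count comes from writing harmonicity as $\binom{n}{d-1}$ linear constraints on the $\binom{n}{d}$-dimensional space $W_d$ and proving these constraints linearly independent by exhibiting an explicit $P$ of pure degree $d$ with $\sum_i \partial P/\partial x_i = x_1\cdots x_{d-1}$. You instead assemble the standard Lefschetz $\mathfrak{sl}_2$-triple on the Boolean lattice ($E=D$, $F=U$, $H$ acting as $(n-2d)I$ on $W_d$) and read both claims off from complete reducibility: $\ppH_{n,d}=\ker D\cap W_d$ is the space of highest-weight vectors of weight $n-2d$, so it vanishes when $n-2d<0$ and otherwise has dimension equal to the multiplicity $\dim W_d-\dim W_{d-1}$ of $L(n-2d)$. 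Your commutator computation is right, and your sign conventions are consistent ($D$ is the raising operator here, since it increases the weight $n-2d$, so its kernel in each weight space is exactly the highest-weight vectors). The trade-off: the paper's argument is self-contained and uses nothing beyond linear algebra, whereas yours imports Weyl's complete-reducibility theorem and the classification of irreducible $\mathfrak{sl}_2$-modules; in exchange you get for free the primitive decomposition $W_d=\bigoplus_{d'\le d}U^{d-d'}\ppH_{n,d'}$ and the injectivity of $U$ below the middle layer, which is precisely the structure underlying the Johnson-scheme decomposition used later in the paper.
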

\begin{proof}
 We start by proving the upper bound on the degree of polynomials in $\pH_n$. Let $P \in \pH_n$ have degree $\deg P = d$. The pure degree $d$ part of $P$ is also in $H_n$, and so we can assume without loss of generality that $P$ has pure degree $d$. For any $\mathbf{y} = y_1,\ldots,y_n$, the univariate polynomial $P(t\mathbf{1} + \mathbf{y})$ (where $\mathbf{1}$ is the constant vector) doesn't depend on $t$, since
\[ \frac{dP(\mathbf{y} + t\mathbf{1})}{dt} = \sum_{i=1}^n \frac{\partial P}{\partial x_i}(\mathbf{y} + t\mathbf{1}) = 0. \]
In particular, if $M$ is any monomial in $P$ with coefficient $\alpha \neq 0$ and $\mathbf{y}$ is the vector with $y_i = -1$ whenever $x_i$ appears in $M$ and $y_i = 0$ otherwise, then $P(\mathbf{v} + \mathbf{1}) = P(\mathbf{v}) = (-1)^d \alpha \neq 0$, showing that $P$ must contain some monomial supported on variables not appearing in $M$, since $y_i = 1$ only for $x_i$ not appearing in $M$. In particular, $2d \leq n$.

 We proceed with the formula for $\ppH_{n,d}$; the formula for $\pH_{n,d}$ easily follows. When $d = 0$, the formula clearly holds, so assume $d \geq 1$.  The vector space of all multilinear polynomial of pure degree $d$ over $x_1,\ldots,x_n$ has dimension $\binom{n}{d}$. Denote by $\cf(P,M)$ the coefficient of the monomial $M$ in $P$. Harmonicity is the set of conditions
\[ \sum_{\substack{i \in [n]: \\ x_i \notin M}} \cf(P,x_iM) = 0 \text{ for all monomials $M$ of degree $d-1$}. \]
There are $\binom{n}{d-1}$ conditions, showing that $\dim \ppH_{n,d} \geq \binom{n}{d} - \binom{n}{d-1}$. In order to prove equality, we need to show that the conditions are linearly independent. We do this by showing that there is a polynomial $P$ having pure degree $d$ satisfying all but one of them, that is
\[ \sum_{i=1}^n \frac{\partial P}{\partial x_i} = x_1 \cdots x_{d-1}. \]
Such a polynomial is given by
\[ P = \frac{1}{d} \sum_{t=1}^d (-1)^{t+1} \binom{d}{t} \EE_{\substack{A \in [d-1]\colon |A| = d-t \\ B \in [n] \setminus [d-1]\colon |B| = t}} x_A x_B, \]
using the notation $x_S = \prod_{i \in S} x_i$. Indeed,
\begin{align*}
\sum_{i=1}^n \frac{\partial P}{\partial x_i} &= x_1\cdots x_{d-1} + \frac{1}{d} \sum_{t=2}^d \left[t \cdot (-1)^{t+1} \binom{d}{t} + (d-t+1) \cdot (-1)^t \binom{d}{t-1}\right] \EE_{\substack{A \in [d-1]\colon |A| = d-t \\ B \in [n] \setminus [d-1]\colon |B| = t-1}} x_A x_B \\ &= x_1 \cdots x_{d-1}. \qedhere
\end{align*}
\end{proof}

Frankl and Graham~\cite{FranklGraham} gave a basis for $\pH_n$.

\begin{definition} \label{def:frankl-graham}
For $d \leq n/2$, a \emph{sequence of length $d$} is a sequence $S = s_1,\ldots,s_d$ of distinct numbers in $[n]$. The set of all sequence of length $d$ is denoted by $\cS_{n,d}$, and the set of all sequences is denoted by $\cS_n$.

For any two disjoint sequences $A,B \in \cS_{n,d}$ we define
\[ \chi_{A,B} = \prod_{i=1}^d (x_{a_i} - x_{b_i}). \]
\end{definition}

The basis functions will be $\chi_{A,B}$ for appropriate $A,B$.

\begin{definition} \label{def:top-set}
For $d \leq n/2$, let $A,B \in \cS_{n,d}$ be disjoint. We say that \emph{$A$ is smaller than $B$}, written $A < B$, if $a_i < b_i$ for all $i \in [d]$. Similarly, we say that \emph{$A$ is at most $B$}, written $A \leq B$, if $a_i \leq b_i$ for all $i \in [d]$.

A sequence $B \in \cS_n$ is a \emph{top set} if $B$ is increasing and for some disjoint sequence $A$ of the same length, $A < B$. The set of top sets of length $d$ is denoted by $\cB_{n,d}$, and the set of all top sets is denoted by $\cB_n$.
\end{definition}

The following lemma is mentioned without proof in~\cite{FranklGraham}.

\begin{lemma} \label{lem:top-sets}
For $0 \leq d \leq n/2$, $|\cB_{n,d}| = \binom{n}{d} - \binom{n}{d-1}$, where $\binom{n}{-1} = 0$.
\end{lemma}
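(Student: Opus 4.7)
The plan is to first characterize top sets by a simple numerical condition, and then count them by a lattice-path argument combined with the reflection principle.

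Step one is to prove that an increasing sequence $b_1 < \cdots < b_d$ in $[n]$ is a top set if and only if $b_i \geq 2i$ for every $i \in [d]$. For the forward direction, if $a_1,\ldots,a_d$ witnesses the top-set property, then $a_1, \ldots, a_i$ are $i$ distinct elements of $[b_i - 1] \setminus B$, and since $|B \cap [b_i - 1]| = i - 1$, we obtain $i \leq b_i - i$. Conversely, if the inequality holds for every $i$, define $a_i$ to be the $i$-th smallest element of $[n] \setminus B$; then $[b_i - 1] \setminus B$ has cardinality $b_i - i \geq i$, so $a_i \leq b_i - 1$, and the resulting sequence $A$ is disjoint from $B$ by construction.

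Step two is the count. Encode a subset $B \subseteq [n]$ as a lattice path from $(0,0)$ to $(d, n-d)$ using unit right- and up-steps, taking a right-step at time $t$ if and only if $t \in B$. After step $b_i$ the path sits at $(i, b_i - i)$, so the condition $b_i \geq 2i$ for all $i$ is equivalent to the path never leaving the region $y \geq x$ (a potential violation can only occur immediately after a right-step, and up-steps preserve $y - x \geq 0$). By the standard reflection trick across the line $y = x - 1$, the bad paths are in bijection with the unrestricted lattice paths from the reflected start $(1, -1)$ to $(d, n-d)$, of which there are $\binom{n}{d-1}$. Subtracting from the total $\binom{n}{d}$ gives $|\cB_{n,d}| = \binom{n}{d} - \binom{n}{d-1}$.

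The main obstacle is Step one: one has to identify the correct Hall-type condition and verify both directions carefully, in particular that the witness $A$ can be chosen as the $i$-th smallest elements outside $B$. Once this characterization is in place, the lattice-path reformulation and the reflection count are textbook Catalan-type arguments, and the boundary case $d = 0$ is trivial.
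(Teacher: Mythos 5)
Your proof is correct and takes essentially the same route as the paper: the paper encodes $B$ as a $\pm1$ ballot sequence (equivalent to your lattice path), notes that the top-set property is equivalent to all running sums being positive (your condition $b_i \geq 2i$), and counts via Bertrand's ballot problem, which is the reflection principle in disguise. The only difference is that you spell out the Hall-type characterization and the explicit witness $A$, a verification the paper dismisses as ``not hard to check.''
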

\begin{proof}
 We encode each sequence $B \in \cS_{n,d}$ as a $\pm 1$ sequence $\beta_0,\dots,\beta_d$ as follows. We put $\beta_0 = 1$, and for $i \in [d]$, $\beta_i = 1$ if $i \notin B$ and $\beta_i = -1$ if $i \in B$. It is not hard to check that $B$ is a top set iff all running sums of $\beta$ are positive. Each sequence $\beta$ is composed of $d$ entries $-1$ and $n-d+1$ entries $1$. The probability that such a sequence has all running sums positive is given by the solution to Bertrand's ballot problem: it is $\frac{(n-d+1)-d}{n+1}$. Therefore the total number of top sets is
\[ \frac{(n-d+1)-d}{n+1} \binom{n+1}{d} = \frac{n-d+1}{n+1} \binom{n+1}{n-d+1} - \frac{d}{n+1} \binom{n+1}{d} = \binom{n}{n-d} - \binom{n}{d-1}. \qedhere \]
\end{proof}

We can now give Frankl and Graham's basis, which is given in~\cite{FranklGraham} without proof.

\begin{lemma} \label{lem:frankl-graham}
For each $B \in \cB_n$, let $\phi(B) \in \cS_{n,|B|}$ be any sequence satisfying $\phi(B) < B$. The set $\{ \chi_{\phi(B),B} : B \in \cB_n \}$ is a basis for $\pH_n$. Moreover, for $d \leq n/2$, the set $\{ \chi_{\phi(B),B} : B \in \cB_{n,d} \}$ is a basis for $\ppH_{n,d}$.
\end{lemma}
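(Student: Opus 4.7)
The plan is to verify that each $\chi_{\phi(B),B}$ lies in $\ppH_{n,|B|}$ and then establish linear independence via a triangular structure on leading monomials. Membership is quick: disjointness of $\phi(B)$ and $B$ makes the product $\prod_i(x_{a_i}-x_{b_i})$ multilinear of pure degree $d=|B|$, and the derivation $D = \sum_i \partial/\partial x_i$ annihilates each factor $x_{a_i}-x_{b_i}$, so by the Leibniz rule it annihilates the entire product.

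For linear independence within $\ppH_{n,d}$, I would expand
\[
\chi_{A,B} = \sum_{T \subseteq [d]} (-1)^{|T|} \prod_{i \notin T} x_{a_i} \prod_{i \in T} x_{b_i},
\]
so every monomial of $\chi_{A,B}$ has the form $x_S$ with $S = \{c_1,\ldots,c_d\}$ and $c_i \in \{a_i,b_i\}$. When $A < B$, so $a_i < b_i$, the elements $c_1,\dots,c_k$ are $k$ distinct members of $S$ each at most $b_k$, so on sorting $S$ as $s_1 < \cdots < s_d$ we obtain $s_k \leq b_k$ for every $k$. Define the partial order on $\cB_{n,d}$ by $B \preceq B'$ iff $b_i \leq b'_i$ for all $i$. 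The previous observation implies that $\cf(\chi_{\phi(B'),B'}, x_B) \neq 0$ forces $B \preceq B'$. Moreover, since $\phi(B)$ and $B$ are disjoint, the only subset $T$ contributing $x_B$ in the expansion of $\chi_{\phi(B),B}$ is $T=[d]$, giving $\cf(\chi_{\phi(B),B}, x_B) = (-1)^d \neq 0$. Extending $\preceq$ to any compatible total order, the matrix $\cf(\chi_{\phi(B'),B'}, x_B)$ indexed by $\cB_{n,d} \times \cB_{n,d}$ is upper triangular with nonzero diagonal, hence invertible, so linear independence follows.

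Lemmas~\ref{lem:dimension} and~\ref{lem:top-sets} give $|\cB_{n,d}| = \binom{n}{d} - \binom{n}{d-1} = \dim \ppH_{n,d}$, upgrading linear independence to a basis of $\ppH_{n,d}$. The corresponding statement for $\pH_n$ then follows from the direct sum decomposition $\pH_n = \bigoplus_{d \leq n/2} \ppH_{n,d}$, which holds because $D$ is degree-lowering, so the pure-degree components of any harmonic polynomial are themselves harmonic.

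The main obstacle will be the combinatorial leading-monomial claim: that $x_B$ can occur in $\chi_{\phi(B'),B'}$ only when $B \preceq B'$. This step is what makes the triangular structure available in the absence of a natural monomial ordering on $\RR[x_1,\ldots,x_n]$ adapted to harmonicity, and it relies on using both the defining condition $\phi(B') < B'$ and the dominance inequality on sorted sets.
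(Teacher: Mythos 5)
Your proposal is correct and follows essentially the same route as the paper: membership in $\ppH_{n,d}$ by annihilating each factor $x_{a_i}-x_{b_i}$ with $\sum_i \partial/\partial x_i$, linear independence via the dominance/triangularity claim that $x_S$ occurring in $\chi_{\phi(B),B}$ forces $S \leq B$ (with diagonal coefficient $(-1)^d$), and then dimension counting from Lemmas~\ref{lem:dimension} and~\ref{lem:top-sets}. Your write-up is slightly more explicit than the paper's (which states the triangularity as an ``echelon form'' observation), but the argument is the same.
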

\begin{proof}
It is clearly enough to prove that $X_d = \{ \chi_{\phi(B),B} : B \in \cB_{n,d} \}$ is a basis for $\ppH_{n,d}$. In view of Lemma~\ref{lem:dimension} and Lemma~\ref{lem:top-sets}, it is enough to prove that all functions in $X_d$ belong to $\ppH_{n,d}$, and that $X_d$ is linearly independent. Clearly all functions in $X_d$ are multilinear polynomial of pure degree $d$. To show that they are harmonic, notice that if $A < B$, where $|A| = |B| = d$, then
\[
\sum_{i=1}^n \frac{\partial \chi_{A,B}}{\partial x_i} = \sum_{j=1}^d \left(\frac{\partial \chi_{A,B}}{\partial x_{a_j}} + \frac{\partial \chi_{A,B}}{\partial x_{b_j}}\right) = \sum_{j=1}^d (1-1) \frac{\chi_{A,B}}{x_{a_j} - x_{b_j}} = 0.
\]

It remains to prove that $X_d$ is linearly independent. For an increasing sequence $S \in \cS_{n,d}$, let $\Pi(S)$ be the monomial $\Pi(S) = \prod_{i=1}^d x_{s_i}$. If $\Pi(S)$ appears in $\chi_{\phi(B),B}$ then $S \leq B$. Consider now the matrix representing $X_d$ in the basis $\{\Pi(S) : S \in \cS_{n,d}\}$ arranged in an order compatible with the partial order of $\cS_{n,d}$. The resulting matrix is in echelon form and so has full rank, showing that $X_d$ is a linearly independent set.
\end{proof}

We comment that $\dim \ppH_{n,d} = \binom{n}{d} - \binom{n}{d-1}$ is the dimension of the irreducible representation of $S_n$ corresponding to the partition $(n-d) + d$, as easily calculated using the hook formula. This is not a coincidence: indeed, the set of standard Young tableaux of shape $(n-d),d$ is in bijection with $\cB_{n,d}$ by identifying the top sets with the contents of the second row of each such tableau (the tableau can be completed uniquely).

\section{Young's orthogonal basis} \label{sec:basis}

In this section we will construct an orthogonal basis for $\pH_n$, and calculate the norms of the basis elements. Our basis will be orthogonal with respect to a wide class of measures.

\begin{definition} \label{def:invariant}
 A probability distribution over the variables $x_1,\ldots,x_n$ is \emph{exchangeable} if it is invariant under permutations of the indices. Given an exchangeable distribution $\mu$, we define an inner product on $\pH_n$ by
\[ \langle f,g \rangle = \EE_\mu[fg]. \]
 The norm of $f \in \pH_n$ is $\|f\| = \sqrt{\langle f,f \rangle}$.
\end{definition}

We are now ready to define the basis.

\begin{definition} \label{def:young-basis}
For $B \in \cB_{n,d}$, define
\[
\chi_B = \sum_{\substack{A \in \cS_{n,d}\colon \\ A < B}} \chi_{A,B}.
\]
For $d \leq n/2$, we define
\[
\chi_d = \chi_{2,4,\ldots,2d} = \chi_{1,3,\ldots,2d-1;2,4,\ldots,2d} = \prod_{i=1}^d (x_{2i-1} - x_{2i}).
\]

\emph{Young's orthogonal basis} for $\pH_n$ is
\[ \cY_n = \{ \chi_B : B \in \cB_n \}. \]
Young's orthogonal basis for $\ppH_{n,d}$ is
\[ \cY_{n,d} = \{ \chi_B : B \in \cB_{n,d} \}. \]
\end{definition}

We stress that the sequences $A$ in the definition of $\chi_B$ need not be increasing.

The following theorem justifies the name ``orthogonal basis''.

\begin{theorem} \label{thm:orthogonal}
The set $\cY_n$ is an orthogonal basis for $\pH_n$ with respect to any exchangeable measure.
The set $\cY_{n,d}$ is an orthogonal basis for $\ppH_{n,d}$ with respect to any exchangeable mesure.
In particular, the subspaces $\ppH_{n,d}$ for different $d$ are mutually orthogonal.
\end{theorem}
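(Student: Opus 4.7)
The plan has three stages: the basis property, orthogonality between different degrees $d$, and orthogonality between distinct top sets at the same degree (which will be the main obstacle).

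\emph{Basis property.} Each $\chi_B$ lies in $\ppH_{n,d}$ as a sum of harmonic pure-degree-$d$ polynomials $\chi_{A,B}$. For linear independence, I expand $\chi_{A,B} = \sum_{T\subseteq[d]}(-1)^{|T|}x^{C(A,B,T)}$, where $C(A,B,T) := \{b_i : i \in T\}\cup\{a_i: i\notin T\}$. Only $T=[d]$ contributes a term equal to $x^B$, so $\cf(\chi_B, x^B) = (-1)^d\,|\{A:A<B\}|\neq0$. Moreover, every other $C(A,B,T)$, sorted increasingly, is componentwise $\leq B$: at most $d-j$ elements of $C$ can exceed $b_j$, because such elements must be either $b_i$ or $a_i$ with $i>j$, using $a_i<b_i$. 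Combined with $|\cB_{n,d}| = \dim\ppH_{n,d}$ from Lemmas~\ref{lem:dimension} and~\ref{lem:top-sets}, the triangular structure yields a basis.

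\emph{Cross-degree orthogonality.} Writing $a_M := \cf(P, x^M)$ for $P \in \ppH_{n,d}$, I first prove by induction on $d-|L|$ that $\sum_{M \supseteq L, |M|=d} a_M = 0$ for every $L \subseteq [n]$ with $|L| < d$: the base case $|L|=d-1$ is harmonicity, and the inductive step sums the identity for $L\cup\{j\}$ over $j\notin L$, with diagonal terms vanishing by multilinearity ($\partial_j^2=0$). For any $N$ with $|N| = e < d$, permutation-invariance of $\mu$ forces $\EE_\mu[x^M x^N]$ to depend only on $|M\cap N|$, giving $\EE_\mu[P\cdot x^N] = \sum_j r_j \sum_{|M\cap N|=j} a_M$. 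Inclusion--exclusion on the condition ``$M \cap N = T$'' for $T \subseteq N$ of size $j$ rewrites the inner sum as $\sum_T \sum_{T' \subseteq N\setminus T}(-1)^{|T'|}\sum_{M \supseteq T\cup T',\,|M|=d} a_M$, and the innermost sum vanishes by strengthened harmonicity since $|T \cup T'| \leq |N| < d$. Linearity extends this to $\langle P, Q\rangle_\mu = 0$ for every multilinear $Q$ of degree $<d$, so $\ppH_{n,d} \perp \ppH_{n,e}$ whenever $d\neq e$.

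\emph{Within-degree orthogonality.} For distinct $B, B' \in \cB_{n,d}$, I aim to show $\langle\chi_B,\chi_{B'}\rangle_\mu = 0$ by exhibiting commuting self-adjoint operators that diagonalize $\ppH_{n,d}$ with the $\chi_B$ as simultaneous eigenvectors and pairwise distinct eigenvalue tuples. Permutation-invariance of $\mu$ makes each transposition operator $\tau_{ij} : f \mapsto f^{(i,j)}$ self-adjoint, so the sums $L_k := \sum_{j<k}\tau_{jk}$, for $k = 2,\ldots,n$, are self-adjoint and pairwise commute. I plan to prove by induction on $k$ that $L_k\chi_B = c_k(B)\,\chi_B$, where $c_k(B) := j-2$ if $k = b_j \in B$ and $c_k(B) := \ell-1$ if $k \notin B$ is the $\ell$-th smallest element of $[n]\setminus B$ (this is the content of the cell holding $k$ in the standard Young tableau of shape $(n-d,d)$ determined by $B$). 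Distinct top sets give distinct content tuples, so some $L_k$ separates $\chi_B$ from $\chi_{B'}$ by eigenvalue, forcing orthogonality.

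The main difficulty is verifying the eigenvalue identity $L_k\chi_B = c_k(B)\chi_B$. This requires a delicate case analysis of $\tau_{jk}\chi_{A,B}$ according to the positions of $j$ and $k$ (in $B$ versus in its complement, and whether they appear among $a_1,\ldots,a_d$), a careful reindexing of the summation $\sum_{A<B}$ under the swap (accounting for how the constraint $A<B$ transforms), and a reassembly of the transposed terms into $c_k(B)\chi_B$ plus correction terms that cancel by the inductive hypothesis. I expect this combinatorial bookkeeping to be the bulk of the argument, with the rest of the theorem following cleanly from the three self-adjointness, harmonicity, and triangularity considerations above.
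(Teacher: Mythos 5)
Your route is viable but genuinely different from the paper's. The paper proves all of the orthogonality (both across degrees and within a degree) in one stroke: it expands $\langle\chi_{B_1},\chi_{B_2}\rangle$ as a sum of expectations of ``quadratic products'' $\chi_{A_1,B_1}\chi_{A_2,B_2}$, encodes each as a union of paths and cycles, and constructs a sign-reversing involution by reversing a carefully chosen odd-length interval; permutation-invariance makes paired terms cancel. You instead split the problem. Your cross-degree argument --- bootstrapping harmonicity to $\sum_{M\supseteq L}a_M=0$ for all $|L|<d$ and then killing $\EE_\mu[P\,x^N]$ by inclusion--exclusion --- is correct and arguably cleaner and more conceptual than the paper's treatment of that case: it shows $\ppH_{n,d}$ is orthogonal to \emph{all} multilinear polynomials of lower degree. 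Your within-degree argument via the self-adjoint Jucys--Murphy-type operators $L_k=\sum_{j<k}\tau_{jk}$ is also a legitimate strategy, and the eigenvalue identity you need is true: it is exactly the paper's Lemma~\ref{lem:transposition-sum} (your $c_k(B)$ agrees with the paper's $\lambda_k(B)$, since if $b_{i-1}<k<b_i$ then $k$ is the $(k-i+1)$-st element of $[n]\setminus B$), which the paper proves by induction on $k$ from an explicit formula for $\chi_B^{(m\;m+1)}$ (Lemma~\ref{lem:transposition}). Your separation claim also checks out: at the first index $k$ where $B,B'$ differ, say $k=b_j\in B\setminus B'$, one gets $c_k(B)=j-2$ versus $c_k(B')=k-j\geq j$ (using $b_j\geq 2j$ for top sets). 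What each approach buys: the paper's involution is short and self-contained; yours front-loads the transposition formula, which the paper needs anyway in Section~4 for the influence computations.

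The one caveat: as written, your proposal does not actually prove the identity $L_k\chi_B=c_k(B)\chi_B$ --- you explicitly defer it as ``the bulk of the argument.'' That identity is the entire technical content of your within-degree step, and the case analysis is genuinely delicate (the paper needs about a page and a half for Lemmas~\ref{lem:transposition} and~\ref{lem:transposition-sum}, including the correction terms when $B^{(m\;m+1)}$ fails to be a top set). So the proposal is a sound and complete \emph{plan}, but not yet a complete proof; to finish it you must carry out that induction, essentially reproducing Lemma~\ref{lem:transposition}.
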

\begin{proof}
Lemma~\ref{lem:frankl-graham} shows that each $\chi_B \in \cY_{n,d}$ lies in $\ppH_{n,d}$. The technique used in proving the lemma shows that $\cY_{n,d}$ is a basis for $\ppH_{n,d}$, but this will also follow from Lemma~\ref{lem:dimension} once we prove that the functions in $\cY_{n,d}$ are pairwise orthogonal. In fact, we will prove the following more general claim: if $B_1,B_2 \in \cB_n$ and $B_1 \neq B_2$ then $\chi_{B_1},\chi_{B_2}$ are orthogonal. This will complete the proof of the theorem.

Consider any $B_1 \in \cB_{n,d_1}$ and $B_2 \in \cB_{n,d_2}$, where $B_1 \neq B_2$. Our goal is to prove that $\langle \chi_{B_1},\chi_{B_2} \rangle = 0$. We have
\[
\langle \chi_{B_1}, \chi_{B_2} \rangle = \sum_{\substack{A_1 < B_1 \\ A_2 < B_2}} \EE[\chi_{A_1,B_1}\chi_{A_2,B_2}].
\]
We call each of the terms $\chi_{A_1,B_1} \chi_{A_2,B_2}$ appearing in this expression a \emph{quadratic product}. We will construct a sign-flipping involution among the quadratic products, completing the proof. The involution is also allowed to have fixed points; in this case, the expectation of the corresponding quadratic product vanishes.

Consider a quadratic product
\[ \chi_{A_1,B_1} \chi_{A_2,B_2} = \prod_{i=1}^{d_1} (x_{a_{1,i}} - x_{b_{1,i}}) \prod_{j=1}^{d_2} (x_{a_{2,j}} - x_{b_{2,j}}). \]
We can represent this quadratic product as a directed graph $G$ on the vertex set $A_1 \cup A_2 \cup B_1 \cup B_2$. For each factor $x_i - x_j$ in the quadratic product, we draw an edge from $i$ to $j$; all edges point in the direction of the larger vertex (the vertex having a larger index). We further annotate each edge with either $1$ or $2$, according to which of $\chi_{A_1,B_1},\chi_{A_2,B_2}$ it corresponds to. Every variable $x_i$ appears in at most two factors, and so the total degree of each vertex is at most $2$. Therefore the graph decomposes as an undirected graph into a disjoint union of paths and cycles. The annotations on the edges alternate on each connected component.

Every directed graph $G'$ in which edges point in the direction of the larger vertex, the total degree of each vertex is at most $2$, and the annotations on the edges alternate in each connected component, is the graph corresponding to some quadratic product $\chi_{A'_1,B'_1} \chi_{A'_2,B'_2}$. The value of $A'_1,B'_1,A'_2,B'_2$ can be read using the annotations on the edges. We define $\EE[G'] = \EE[\chi_{A'_1,B'_1} \chi_{A'_2,B'_2}]$.

Since $B_1 \neq B_2$, some connected component must have a vertex with in-degree $1$. Choose the connected component $C$ satisfying this property having the largest vertex. We construct a sequence of intervals inside $C$, with the property that each of the endpoints $x,y$ of each interval is either an endpoint of $C$, or is connected to the rest of $C$ via a vertex $z>x,y$. Furthermore, each interval, other than possibly the last one, contains some vertex with in-degree $1$. The sequence terminates with an interval containing an odd number of edges.

When $C$ is a path, the first interval $I_0$ is the entire path. When $C$ is a cycle with maximal vertex $M$, the first interval $I_0$ is the path obtained by removing $M$ from $C$. Given an interval $I_t$ with an even number of edges, we can break it into two (possibly empty) subintervals terminating at the maximal point $M_t$ of $I_t$: $I_t = J_t \to M_t \gets K_t$. Note that not both $J_t,K_t$ can be empty since $I_t$ contains some vertex with in-degree $1$. If $J_t$ is empty then we define $I_{t+1} = K_t$, which terminates the sequence. Similarly, if $K_t$ is empty then we define $I_{t+1} = J_t$, which terminates the sequence. If both $J_t,K_t$ are non-empty then at least one of them has a vertex with in-degree $1$. We let $I_{t+1}$ be the sub-interval among $J_t,K_t$ with the larger maximal point.

Since the intervals decrease in size, the sequence eventually terminates at some interval $I_t = v_1,\ldots,v_\ell$ having an odd number of edges (so $\ell$ is even). We now consider two graphs obtained from $G$. The first graph $G^\pi$ is obtained by applying the permutation $\pi$ which maps $v_i$ to $v_{\ell+1-i}$ and fixes all other vertices. The second graph $G^r$ is obtained by detaching $I_t$ from $G$, reversing it, and attaching it back to $G$; see Figure~\ref{fig:orthogonal}.

If we run the same construction on $G^r$ then we get the same connected component $C$ and the same sequence of intervals $I_0,\ldots,I_t$, and so $(G^r)^r = G$, that is, the mapping $G \mapsto G^r$ is an involution.

\begin{figure}

\centering

\subcaptionbox{The graph $G$, with $I_1$ and $I_2$ highlighted}[0.2\textwidth]{
\begin{tikzpicture}
\node[blue] (A) at (-1,0) {$2$};
\node[blue] (B) at (-0.5, 0.866) {$3$};
\node[blue] (C) at (0.5, 0.866) {$4$};
\node[blue] (D) at (1,0) {$1$};
\node (E) at (0.5, -0.866) {$6$};
\node[cyan] (F) at (-0.5, -0.866) {$5$};

\draw[->, blue] (A) -- (B) node[midway,left] {$1$};
\draw[->, blue] (B) -- (C) node[midway,above] {$2$};
\draw[<-, blue] (C) -- (D) node[midway,right] {$1$};
\draw[->] (D) -- (E) node[midway,right] {$2$};
\draw[<-] (E) -- (F) node[midway,below] {$1$};
\draw[<-, cyan] (F) -- (A) node[midway,left] {$2$};
\end{tikzpicture}
}
\qquad
\subcaptionbox{The graph $G^\pi$, where $\pi = (1\;2)(3\;4)$}[0.2\textwidth]{
\begin{tikzpicture}
\node[blue] (A) at (-1,0) {$1$};
\node[blue] (B) at (-0.5, 0.866) {$4$};
\node[blue] (C) at (0.5, 0.866) {$3$};
\node[blue] (D) at (1,0) {$2$};
\node (E) at (0.5, -0.866) {$6$};
\node[cyan] (F) at (-0.5, -0.866) {$5$};

\draw[->, blue] (A) -- (B) node[midway,left] {$1$};
\draw[->, blue] (B) -- (C) node[midway,above] {$2$};
\draw[<-, blue] (C) -- (D) node[midway,right] {$1$};
\draw[->] (D) -- (E) node[midway,right] {$2$};
\draw[<-] (E) -- (F) node[midway,below] {$1$};
\draw[<-, cyan] (F) -- (A) node[midway,left] {$2$};
\end{tikzpicture}
}
\qquad
\subcaptionbox{The graph $G^r$, with reversed edge in purple}[0.2\textwidth]{
\begin{tikzpicture}
\node[blue] (A) at (-1,0) {$1$};
\node[blue] (B) at (-0.5, 0.866) {$4$};
\node[blue] (C) at (0.5, 0.866) {$3$};
\node[blue] (D) at (1,0) {$2$};
\node (E) at (0.5, -0.866) {$6$};
\node[cyan] (F) at (-0.5, -0.866) {$5$};

\draw[->, blue] (A) -- (B) node[midway,left] {$1$};
\draw[<-, magenta] (B) -- (C) node[midway,above] {$2$};
\draw[<-, blue] (C) -- (D) node[midway,right] {$1$};
\draw[->] (D) -- (E) node[midway,right] {$2$};
\draw[<-] (E) -- (F) node[midway,below] {$1$};
\draw[<-, cyan] (F) -- (A) node[midway,left] {$2$};
\end{tikzpicture}
}

\caption{Illustration of the proof of Theorem~\ref{thm:orthogonal} for $\chi_{A_1,B_1} = (x_2 - x_3)(x_1 - x_4)(x_5 - x_6)$ and $\chi_{A_2,B_2} = (x_3 - x_4)(x_2 - x_5)(x_1 - x_6)$}
\label{fig:orthogonal}

\end{figure}

Since the measure is exchangeable, $\EE[G] = \EE[G^\pi]$. The graphs $G^\pi,G^r$ differ only in the direction of some edges: the edge between $v_i$ and $v_{i+1}$ in $G^\pi$ has the same direction as the edge between $v_{\ell+1-i}$ and $v_{\ell-i}$ in $G$, which is the same as its direction in $G^r$. Let $\sigma_\pi(v_i,v_{i+1}) = 1$ if the edge between $v_i$ and $v_{i+1}$ goes from $v_i$ to $v_{i+1}$ in $G^\pi$, let $\sigma_\pi(v_i,v_{i+1}) = -1$ if it goes in the other direction, and define $\sigma_r(v_i,v_{i+1})$ analogously. Then
\[
 \EE[G] = \EE[G^\pi] = \sigma \EE[G^r], \text{ where } \sigma = \prod_{i=1}^{\ell-1} \sigma_\pi(v_i,v_{i+1}) \sigma_r(v_i,v_{i+1}).
\]
Since $\sigma_\pi(v_i,v_{i+1}) = -\sigma_r(v_{\ell-i},v_{\ell-i+1})$, we have
\[
 \sigma = (-1)^{\ell-1} \prod_{i=1}^{\ell-1} \sigma_\pi(v_i,v_{i+1}) \sigma_\pi(v_{\ell-i},v_{\ell-i+1}) = -1,
\]
and so $\EE[G] = -\EE[G^r]$. 

By construction, $G^r$ (but not $G^\pi$) corresponds to some quadratic product $\chi_{A'_1,B_1} \chi_{A'_2,B_2}$. Therefore $\chi_{A_1,B_1} \chi_{A_2,B_2} \mapsto \chi_{A'_1,B_1} \chi_{A'_2,B_2}$ is a sign-flipping involution on the collection of all quadratic products, completing the proof.
\end{proof}

In order to complete the picture, we need to evaluate the norms of the basis elements $\chi_B$, which necessarily depend on the measure.

\begin{theorem} \label{thm:norms}
 Let $B \in \cB_{n,d}$. The squared norm of $\chi_B$ is $\|\chi_B\|^2 = c_B \|\chi_d\|^2$ with respect to any exchangeable measure, where
\[ c_B = \prod_{i=1}^d \frac{(b_i-2(i-1))(b_i-2(i-1)-1)}{2}. \]
\end{theorem}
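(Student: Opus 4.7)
The plan is to compute $\|\chi_B\|^2 = \sum_{A_1, A_2 < B} \EE[\chi_{A_1, B}\chi_{A_2, B}]$ by reducing to a single convenient permutation-invariant measure and then establishing a combinatorial identity. The key preliminary observation is that the ratio $\|\chi_B\|^2/\|\chi_d\|^2$ does not depend on $\mu$: the inner product $\langle\cdot,\cdot\rangle_\mu$ is an $S_n$-invariant bilinear form on $\ppH_{n,d}$, and together with the dimension match recorded after Lemma~\ref{lem:frankl-graham}, $\ppH_{n,d}$ is the irreducible $S_n$-module corresponding to the partition $(n-d, d)$; Schur's lemma then forces any two invariant inner products on it to be proportional, so it suffices to compute under any convenient $\mu$.

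I would take the uniform product measure on $\{0,1\}^n$, centering by $y_i := x_i - \tfrac{1}{2} \in \{\pm\tfrac{1}{2}\}$, so the $y_i$ are i.i.d.\ and $x_a - x_b = y_a - y_b$. Under independence, a monomial has nonzero expectation only when every variable appears with even multiplicity. Revisiting the graph $G_{A_1, A_2}$ from the orthogonality proof (vertex set $A_1 \cup A_2 \cup B$; edges $\{a_{k,i}, b_i\}$ for $k = 1, 2$ and $i \in [d]$), such surviving monomials are in bijection with orientations of $G_{A_1, A_2}$ in which every vertex has even in-degree. A short case analysis shows that each path component (with endpoints in $A_1 \bigtriangleup A_2$) admits a unique such orientation, each cycle component admits exactly two, and every valid orientation carries sign $+1$; hence
\[
\EE[\chi_{A_1, B}\chi_{A_2, B}] = \frac{2^{c(G_{A_1, A_2})}}{4^d}, \qquad \|\chi_d\|^2 = \frac{1}{2^d},
\]
where $c(G)$ counts the cycle components of $G$. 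The theorem thus reduces to the combinatorial identity
\[
\sum_{A_1, A_2 < B} 2^{c(G_{A_1, A_2})} = \prod_{i=1}^d (b_i - 2(i-1))(b_i - 2(i-1) - 1).
\]

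For this identity I would prove the stronger statement that, for each fixed $A_1 < B$, $\sum_{A_2 < B} 2^{c(G_{A_1, A_2})} = \prod_{i=1}^d (b_i - 2(i-1))$, independent of $A_1$ (a claim I have verified by hand for several small $B$). The natural approach is a position-by-position bijection, or induction on $d$: $2^{c(G)}$ is the number of sign assignments $\epsilon \in \{+1, -1\}^d$ compatible with the graph (namely $\epsilon_i = \epsilon_j$ whenever $a_{1,i} = a_{2,j}$ with $i \ne j$, and $\epsilon_i = +1$ whenever the vertex $a_{1,i}$ or $a_{2,i}$ is single-use), and each pair $(a_{2,i}, \epsilon_i)$ can be encoded as an element of $[b_i] \setminus \{b_1, \dots, b_{i-1}, a_{1,1}, \dots, a_{1,i-1}\}$, a set of size exactly $b_i - 2(i-1)$. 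The main obstacle is the case analysis in executing the bijection: the bonus from each cycle is only visible once the cycle closes, so the encoding must defer this contribution via an ``extra slot'' (corresponding to $\epsilon_i = -1$), and one must verify that the operations of extending a path, merging two paths, or closing a cycle are all correctly tracked across positions.
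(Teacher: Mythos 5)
Your overall strategy is essentially the paper's: reduce to a product measure, evaluate $\EE[\chi_{A_1,B}\chi_{A_2,B}]$ through the path/cycle decomposition of the associated graph (your count of even-in-degree orientations --- one per path, two per cycle, all with sign $+1$ --- is exactly the paper's computation under $\nu_p$, specialized to $p=\tfrac12$), and then prove the identity $\sum_{A_1,A_2<B}2^{c(G_{A_1,A_2})}=\prod_i(b_i-2(i-1))(b_i-2(i-1)-1)$ by a bijective encoding. The two places where you deviate are, however, exactly the two places where the argument is incomplete. For the reduction to a single measure you invoke Schur's lemma, which requires knowing that $\ppH_{n,d}$ is an (absolutely) irreducible $S_n$-module; the dimension match recorded after Lemma~\ref{lem:frankl-graham} does not establish this, since a module of dimension $\binom{n}{d}-\binom{n}{d-1}$ could a priori split into smaller irreducibles, and the paper deliberately proves nothing representation-theoretic. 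The fact is true, but you must prove or properly cite it. The paper instead avoids it with an elementary interpolation: $\|\chi_B\|_m^2$ is a universal linear combination of the degree-$2d$ moments of $m$, and the coefficients are determined by evaluating along the whole family $\mu_p$ and matching powers of $p$. If you fall back on that route, a computation at the single value $p=\tfrac12$ does not suffice; you need the identity $\|\chi_B\|_{\nu_p}^2=2^dc_B(p(1-p))^d$ as a polynomial identity in $p$ (which your graph argument does give with no extra work, since each surviving monomial contributes $(p(1-p))^d$).

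The more serious gap is the enumerative identity itself, which is the computational heart of the theorem and which you do not prove. Your proposed refinement --- that $\sum_{A_2<B}2^{c(G_{A_1,A_2})}=\prod_i(b_i-2(i-1))$ for \emph{every} fixed $A_1<B$ --- does appear to be correct (it checks out on small cases and implies the paper's identity), and fixing $A_1$ is an attractive simplification of the paper's bijection, which encodes both $A_1$ and $A_2$ simultaneously as interleaved ``legal sequences'' with $\alpha_{1,i},\alpha_{2,i}\le b_i$. But you explicitly defer the case analysis (extending a path, merging two paths, closing a cycle, and the deferred factor of $2$ per cycle), and that bookkeeping is precisely where the work lies: the paper spends about half its proof maintaining the corresponding invariants and verifying that the inverse map has valency $2^{C}$. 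As written, the proposal correctly identifies all the relevant quantities but leaves both the algebraic reduction and the combinatorial identity unestablished.
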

(Recall that $\chi_d = \chi_{2,4,\ldots,2d}$.)
\begin{proof}
 We consider first the case in which the exchangeable measure is the measure $\nu_p$ for some $p \in [0,1]$. Under this measure, the variables $x_1,\ldots,x_n$ are independent, with $\Pr[x_i = -p] = 1-p$ and $\Pr[x_i = 1-p] = p$. The expectation of each $x_i$ is $\EE[x_i] = (1-p)(-p) + p(1-p) = 0$, while the variance is $\EE[x_i^2] = (1-p)p^2 + p(1-p)^2 = p(1-p)$. The squared norm of $\chi_B$ is
\[
 \|\chi_B\|^2 = \langle \chi_B,\chi_B \rangle = \sum_{\substack{A_1,A_2 \in \cS_d\colon \\ A_1,A_2 < B}} \EE[\chi_{A_1,B} \chi_{A_2,B}].
\]
 In the proof of Theorem~\ref{thm:orthogonal} we associated a directed graph with each \emph{quadratic product} $\chi_{A_1,B} \chi_{A_2,B}$: the vertices are $A_1 \cup A_2 \cup B$, and the edges point from $a_{1,i}$ and $a_{2,i}$ to $b_i$ for each $i \in [d]$, annotated by $1$ or $2$ according to whether they came from $\chi_{A_1,B}$ or from $\chi_{A_2,B}$. Since each vertex appears at most twice, the graph decomposes as a sum of paths and cycles. The edges point from $A_1,A_2$ to $B$, and so each vertex either has in-degree $0$ (if it is in $A_1 \cup A_2$) or in-degree $2$ (if it is in $B$). Therefore the paths and cycles have the following forms, respectively:
\begin{gather*}
 \alpha_1 \to \beta_1 \gets \alpha_2 \to \beta_2 \gets \cdots \gets \alpha_\ell \to \beta_\ell \gets \alpha_{\ell+1}, \\
 \alpha_1 \to \beta_1 \gets \alpha_2 \to \beta_2 \gets \cdots \gets \alpha_\ell \to \beta_\ell \gets \alpha_1.
\end{gather*}
 Here the $\alpha_i$ belong to $A_1 \cup A_2$, and the $\beta_i$ belong to $B$. The corresponding factors of $\chi_{A_1,B} \chi_{A_2,B}$ are, respectively:
\begin{gather*}
 (x_{\alpha_1} - x_{\beta_1}) (x_{\alpha_2} - x_{\beta_1}) (x_{\alpha_2} - x_{\beta_2}) \cdots (x_{\alpha_\ell} - x_{\beta_\ell}) (x_{\alpha_{\ell+1}} - x_{\beta_\ell}), \\
 (x_{\alpha_1} - x_{\beta_1}) (x_{\alpha_2} - x_{\beta_1}) (x_{\alpha_2} - x_{\beta_2}) \cdots (x_{\alpha_\ell} - x_{\beta_\ell}) (x_{\alpha_1} - x_{\beta_\ell}).
\end{gather*}
 We proceed to calculate the expectation of each of these factors under $\nu_p$. The expectation of a monomial is zero unless each variable appears exactly twice, in which case the expectation is $(p(1-p))^\ell$ (since each monomial has total degree $2\ell$). In the case of a path, there is exactly one such monomial, namely $x_{\beta_1}^2 x_{\beta_2}^2 \cdots x_{\beta_\ell}^2$. In the case of a cycle, there are two such monomials: $x_{\beta_1}^2 x_{\beta_2}^2 \cdots x_{\beta_\ell}^2$ and $x_{\alpha_1}^2 x_{\alpha_2}^2 \cdots x_{\alpha_\ell}^2$. Both monomials appear with unit coefficient. Notice that $\ell$ is the size of the subset of $B$ appearing in the path or cycle. Hence the expectation of the entire quadratic product is $2^C (p(1-p))^d$, where $C = C(A_1,A_2)$ is the number of cycles. In total, we get
\[ \|\chi_B\|_{\nu_p}^2 = \sum_{\substack{A_1,A_2 \in \cS_d\colon \\ A_1,A_2 < B}} 2^{C(A_1,A_2)} (p(1-p))^d. \]
 We proceed to show that
\[ \sum_{\substack{A_1,A_2 \in \cS_d\colon \\ A_1,A_2 < B}} 2^{C(A_1,A_2)} = 2^d c_B = \prod_{i=1}^d (b_i-2(i-1))(b_i-2(i-1)-1). \]
 The quantity on the right enumerates the sequences $\alpha_{1,1},\alpha_{2,1},\alpha_{1,2},\alpha_{2,2},\dots,\alpha_{1,d},\alpha_{2,d} \in \cS_{n,2d}$ in which $\alpha_{1,i},\alpha_{2,i} \leq b_i$ for all $i \in [d]$, which we call \emph{legal sequences}. We show how to map legal sequences into quadratic products in such a way that $\chi_{A_1,B} \chi_{A_2,B}$ has exactly $2^{C(A_1,A_2)}$ preimages.

 Let $\alpha_{1,1},\alpha_{2,1},\alpha_{1,2},\alpha_{2,2},\dots,\alpha_{1,d},\alpha_{2,d} \in \cS_{n,2d}$ be a legal sequence. We construct a quadratic product $\chi_{A_1,B} \chi_{A_2,B}$ alongside its associated directed graph. We maintain the following invariant: after having processed $\alpha_{1,i},\alpha_{2,i}$ (and so adding $b_i$ to the graph), all vertices belonging to cycles have appeared in the sequence, and out of each path, exactly one vertex (from $B$) has not appeared previously in the sequence. Furthermore, the endpoints of each path do not belong to $B$ (and so have incoming edges) and have different annotations.

 We start with the empty product and graph. At step $i$ we process $\alpha_{1,i},\alpha_{2,i}$. Suppose first that $\alpha_{1,i},\alpha_{2,i} \neq b_i$. If $\alpha_{1,i} \notin B$, then we set $a_{1,i} = \alpha_i$ and add the edge $\alpha_i \xrightarrow{1} b_i$. If $\alpha_{1,i} \in B$ then necessarily $\alpha_{1,i} < b_i$, and so it appears in some component $C$. We locate the endpoint $x$ whose incoming edge is labelled $2$, set $a_{1,i} = x$ and add the edge $x \xrightarrow{1} b_i$. Note that the other endpoint of $C$ is labelled $1$. We do the same for $\alpha_{2,i}$. It is routine to check that we have maintained the invariant.

 Suppose next that one of $\alpha_{1,i},\alpha_{2,i}$ equals $b_i$, say $\alpha_{2,i} = b_i$. We process $\alpha_{1,i}$ as in the preceding step. Let $x$ be the other endpoint of the path containing $b_i$. We set $a_{2,i} = x$ and connect $x$ to $b_i$, completing the cycle. This completes the description of the mapping.

 We proceed to describe the multivalued inverse mapping, from a quadratic product to a legal sequence. We process the quadratic product in $d$ steps, updating the graph by removing each vertex mentioned in the legal sequence. We maintain the invariant that each original path remains a path, and each original cycle either remains a cycle or disappears after processing the largest vertex. Furthermore, each edge still points at the larger vertex, the annotations alternate, a vertex $b_i$ not yet processed has two incoming edges, and other vertices have no incoming edges.

 At step $i$, we process $b_i$. Let $x_1,x_2$ be the neighbors of $b_i$ labelled $1,2$, respectively. Suppose first that $x_1 \neq x_2$. We put $\alpha_{1,i} = x_1$ and $\alpha_{2,i} = x_2$, and remove the vertices $x_1,x_2$. The other neighbors of $x_1,x_2$, if any, are connected to $b_i$ with edges pointing away from $b_i$ with annotations $2,1$, respectively. These neighbors had incoming edges and so are $b_j,b_k$ for $j,k>i$. It follows that the invariant is maintained. The case $x_1 = x_2$ corresponds to a cycle whose largest vertex is $b_i$. We either put $\alpha_{1,i} = x_1$ and $\alpha_{2,i} = b_i$ or $\alpha_{1,i} = b_i$ and $\alpha_{2,i} = x_2$, deleting the entire cycle in both cases.

 It is routine to check that the two mappings we have described are inverses. Furthermore, the multivalued mapping from quadratic products to legal sequences has valency $2^{C(A_1,A_2)}$ when processing $\chi_{A,B_1} \chi_{A,B_2}$. This completes the proof of the formula for $2^d c_B$.

 Having considered the measure $\nu_p$, we consider a related measure $\mu_p$. Under this measure the $x_i$ are independent, $\Pr[x_i = 0] = 1-p$, and $\Pr[x_1 = 1] = p$. Note that $(x_1-p,\dots,x_n-p) \sim \nu_p$. Since $(x_i - p) - (x_j - p) = x_i - x_j$, we conclude that
\[ \|\chi_B\|_{\mu_p}^2 = \|\chi_B\|_{\nu_p}^2 = (2p(1-p))^d c_B. \]

 Consider now a general exchangeable measure $m$. Exchangeability implies that for some integers $\gamma_0,\ldots,\gamma_d$,
\[ \|\chi_B\|_m^2 = \sum_{k=0}^d \gamma_k \EE_m\left[\prod_{i=1}^{d-k} x_i^2 \prod_{i=1}^{2k} x_{d-k+i}\right], \]
 since all monomials in $\EE[\chi_B^2]$ have total degree $2d$. Substituting the measure $\mu_p$, which satisfies $\EE[x_i] = \EE[x_i^2] = p$, we obtain
\[ \sum_{k=0}^d \gamma_k p^{d+k} = \|\chi_B\|_{\mu_p}^2 = (2p(1-p))^d c_B. \]
 Reading off the coefficient of $\gamma_k$, we deduce
\[ \gamma_k = (-1)^k \binom{d}{k} 2^d c_B, \]
 and so
\[ \|\chi_B\|_m^2 = 2^d c_B N_d, \quad N_d = \sum_{k=0}^d (-1)^k \binom{d}{k} \EE_m\left[\prod_{i=1}^{d-k} x_i^2 \prod_{i=1}^{2k} x_{d-k+i}\right]. \]
 In order to evaluate $N_d$, we consider $B = 2,4,\ldots,2d$. In this case, $b_i = 2i$ and so
\[ c_B = \prod_{i=1}^d \frac{(b_i-2(i-1))(b_i-2(i-1)-1)}{2} = 1. \]
 We conclude that $2^d N_d = \|\chi_d\|^2$, and the theorem follows.
\end{proof}

Having verified that $\cY_n$ is a basis for $\pH_n$, we can define the corresponding expansion.

\begin{definition} \label{def:expansion}
 Let $f \in \pH_n$. The \emph{Young--Fourier expansion} of $f$ is the unique representation
\[ f = \sum_{B \in \cB_n} \hat{f}(B) \chi_B. \]
\end{definition}

The following simple lemma gives standard properties of this expansion.

\begin{lemma} \label{lem:moments}
 Let $f \in \pH_n$. The following hold with respect to any exchangeable measure. For each $B \in \cB_n$, we have $\hat{f}(B) = \langle f,\chi_B \rangle/\|\chi_B\|^2$. The mean, variance and L2 norm of $f$ are given by
\[
 \EE[f] = \hat{f}(\emptyset), \quad \VV[f] = \sum_{\substack{B \in \cB_n\colon \\ B \neq \emptyset}} \hat{f}(B)^2 c_B \|\chi_{|B|}\|^2, \quad \EE[f^2] = \sum_{B \in \cB_n} \hat{f}(B)^2 c_B \|\chi_{|B|}\|^2,
\]
 where $\emptyset$ is the empty sequence.
\end{lemma}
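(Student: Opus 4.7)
The plan is to obtain all four identities as routine consequences of Theorem~\ref{thm:orthogonal} (orthogonality of $\cY_n$) and Theorem~\ref{thm:norms} (the norm identity $\|\chi_B\|^2 = c_B \|\chi_{|B|}\|^2$). The only ingredients beyond these two theorems are bilinearity of $\langle\cdot,\cdot\rangle$ and a few boundary conventions.

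For the coefficient formula, I would fix $B' \in \cB_n$ and take the inner product of both sides of $f = \sum_{B \in \cB_n} \hat f(B)\chi_B$ against $\chi_{B'}$. By Theorem~\ref{thm:orthogonal} every cross term $\langle \chi_B, \chi_{B'}\rangle$ with $B \neq B'$ vanishes, leaving $\langle f, \chi_{B'}\rangle = \hat f(B') \|\chi_{B'}\|^2$, which rearranges to the stated formula. For the mean I would write $\EE[f] = \sum_B \hat f(B)\,\EE[\chi_B]$; since $\chi_\emptyset$ is the empty product, hence the constant $1$, while every $\chi_B$ with $B\neq\emptyset$ lies in $\ppH_{n,|B|}$ with $|B|\geq 1$ and is therefore orthogonal to the constants (i.e.\ to $\ppH_{n,0}$) by Theorem~\ref{thm:orthogonal}, only the $B=\emptyset$ term survives, yielding $\EE[f] = \hat f(\emptyset)$.

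For the second moment, bilinearity and orthogonality give
\[
\EE[f^2] = \langle f,f\rangle = \sum_{B_1,B_2 \in \cB_n} \hat f(B_1)\hat f(B_2)\,\langle \chi_{B_1},\chi_{B_2}\rangle = \sum_{B \in \cB_n} \hat f(B)^2 \|\chi_B\|^2,
\]
and the displayed formula then follows by substituting $\|\chi_B\|^2 = c_B \|\chi_{|B|}\|^2$ from Theorem~\ref{thm:norms}. The variance identity is then $\VV[f] = \EE[f^2] - \EE[f]^2$: using the boundary conventions $c_\emptyset = 1$ and $\chi_0 = 1$ (so $\|\chi_0\|^2 = 1$), the $B=\emptyset$ summand of $\EE[f^2]$ is exactly $\hat f(\emptyset)^2 = \EE[f]^2$, and removing it produces the stated sum over $B\neq\emptyset$. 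There is no real obstacle in this argument; the only mild points worth spelling out are these boundary conventions and the fact that $\EE[\chi_B]=0$ for $B\neq\emptyset$, which is just orthogonality to constants.
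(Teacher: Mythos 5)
Your proof is correct, and the paper in fact omits a proof of this lemma entirely (it is labeled ``simple'' and ``standard''); your derivation from Theorem~\ref{thm:orthogonal} and Theorem~\ref{thm:norms}, including the boundary conventions $c_\emptyset = 1$ and $\|\chi_0\|^2 = 1$, is exactly the intended routine argument. The only caveat worth a half-sentence is that the coefficient formula $\hat f(B) = \langle f,\chi_B\rangle/\|\chi_B\|^2$ implicitly assumes $\|\chi_B\|^2 \neq 0$ for the measure at hand, a point the paper also glosses over.
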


In particular, if $f \in \pH_n$ then for every exchangeable measure $\pi$ we can write $\EE[f^2] = \sum_d \|\chi_d\|^2 \mathbf{W}^d[f]$, where $\mathbf{W}^d[f] = \sum_{|B|=d} c_B \hat{f}(B)^2$ depends only on $f$. Filmus and Mossel~\cite{FM} give a basis-free proof of this important fact.

The familiar Fourier basis for the Boolean hypercube gives a simple criterion for a function to depend on a variable. The matching criterion in our case is also simple but not as powerful.

\begin{lemma} \label{lem:averaging}
 Let $f \in \pH_n$, and for $m \in [n]$ let $g = \EE_{\pi \in S_m}[f^\pi]$ be the result of averaging $f$ over all permutations of the first $m$ coordinates. Then
\[ g = \sum_{\substack{B \in \cB_n\colon \\ B \cap [m] = \emptyset}} \hat{f}(B) \chi_B. \]
 In particular, $f$ is invariant under permutation of the first $m$ coordinates iff $\hat{f}(B) = 0$ whenever $B$ intersects $[m]$.
\end{lemma}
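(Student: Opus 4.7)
The plan is to compute $P(\chi_B) := \EE_{\pi \in S_m}[\chi_B^\pi]$ for each basis element and then invoke linearity, splitting based on whether $B$ meets $[m]$. Once $P(\chi_B)$ is identified in both cases, the formula for $g$ and its ``in particular'' consequence follow immediately.

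\textbf{Case 1: $B \cap [m] = \emptyset$.} I would show that $\chi_B$ is already $S_m$-invariant. Every $b_i > m$, so any $\pi \in S_m$ fixes $B$ pointwise. The map $A \mapsto A^\pi$ is a bijection on $\{A \in \cS_{n,d} : A < B,\ A \cap B = \emptyset\}$: for $a_i \in [m]$ we have $\pi(a_i) \in [m] < b_i$; for $a_i \notin [m]$ we have $\pi(a_i) = a_i$; disjointness from $B$ is preserved in either case. Hence $\chi_B^\pi = \sum_{A < B}\chi_{A^\pi, B} = \chi_B$, so $P(\chi_B) = \chi_B$.

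\textbf{Case 2: $B \cap [m] \neq \emptyset$.} I would establish the sharper claim that $\sum_{\pi \in S_m}\chi_{A^\pi, B^\pi} = 0$ for every fixed $A < B$; summing over $A$ then yields $P(\chi_B) = 0$. Expand
\[
\chi_{A^\pi, B^\pi} = \sum_{T \subseteq [d]}(-1)^{|T|}\prod_{i \notin T}x_{\pi(a_i)}\prod_{i \in T}x_{\pi(b_i)},
\]
and partition $[d] = P \sqcup Q \sqcup R$ with $P = \{i : a_i, b_i \in [m]\}$, $Q = \{i : a_i \in [m], b_i \notin [m]\}$, $R = \{i : a_i, b_i \notin [m]\}$; the remaining case $a_i \notin [m], b_i \in [m]$ is ruled out by $a_i < b_i$. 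For each $T$ the summand factors as $F_T \cdot \prod_{c \in D^T}x_{\pi(c)}$, where $F_T$ collects the $\pi$-invariant variables (all with subscripts outside $[m]$) and $D^T \subseteq [m]$ is the $\pi$-acted-on subscript set, of size $|D^T| = |P| + |Q| - |T \cap Q|$. Both $F_T$ and $|D^T|$ depend on $T$ only through $T \cap Q$ and $T \cap R$. A standard orbit count yields $\sum_{\pi \in S_m}\prod_{c \in D^T}x_{\pi(c)} = |D^T|!(m - |D^T|)!\, e_{|D^T|}(x_{[m]})$. Collecting terms,
\[
\sum_{\pi \in S_m}\chi_{A^\pi, B^\pi} = \Bigl[\sum_{T_P \subseteq P}(-1)^{|T_P|}\Bigr]\cdot \bigl(\text{quantity depending only on } T \cap Q \text{ and } T \cap R\bigr),
\]
and the bracket equals $(1 - 1)^{|P|}$, which vanishes iff $P \neq \emptyset$; and $P \neq \emptyset$ iff some $b_i \in [m]$, iff $B \cap [m] \neq \emptyset$, exactly matching the hypothesis.

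Combining the two cases via linearity gives $g = \sum_{B \cap [m] = \emptyset}\hat{f}(B)\chi_B$, and the ``in particular'' assertion follows, as $f$ is $S_m$-invariant iff $f = g$ iff $\hat{f}(B) = 0$ for every $B$ meeting $[m]$. The main obstacle is the bookkeeping in Case 2: identifying precisely which factors of $\chi_{A^\pi, B^\pi}$ are moved by $\pi$, partitioning $[d]$ accordingly, and spotting the decoupling of the $T \cap P$ sign, which is the point where the hypothesis $B \cap [m] \neq \emptyset$ is used.
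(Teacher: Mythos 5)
Your argument is correct, and it establishes $g = \sum_{B \cap [m] = \emptyset} \hat{f}(B)\chi_B$ --- note that this is the negation of the condition printed in the lemma statement, which appears to be a typo: the paper's own proof, the ``in particular'' clause, and the application in Theorem~\ref{thm:wimmer} all require the sum to run over $B$ with $B \cap [m] = \emptyset$, exactly as you derived. Your Case~1 coincides with the paper's. In Case~2 you take a genuinely different route. The paper fixes an index $j$ with $b_j \in [m]$, splits $\chi_B = \sum_a \chi_{a,B}$ according to the value $a = a_j$, and kills each piece with the sign-flipping involution $\pi \mapsto (a\;b_j)\pi$ on $S_m$, using $\chi_{a,B}^{(a\;b_j)} = -\chi_{a,B}$; this is a two-line pairing argument. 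You instead expand $\chi_{A^\pi,B^\pi}$ into monomials, symmetrize each monomial over $S_m$ into a multiple of an elementary symmetric polynomial $e_{|D^T|}(x_1,\dots,x_m)$, and observe that the sum over $T \cap P$ decouples into the factor $(1-1)^{|P|}$, which vanishes precisely when $B$ meets $[m]$ (since $b_i \in [m]$ forces $a_i \in [m]$, i.e.\ $i \in P$). Both proofs ultimately exploit the same antisymmetry --- a factor $(x_{a_i} - x_{b_i})$ with both endpoints in $[m]$ averages to zero under $S_m$ --- but yours makes the cancellation explicit for each fixed $A$ via symmetric-function bookkeeping, whereas the paper's involution is shorter and avoids expanding the product. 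Your bookkeeping checks out: the elements of $D^T$ are distinct because $A$ and $B$ are disjoint sequences of distinct entries, the orbit count $|D^T|!\,(m-|D^T|)!$ is right, and $F_T$ and $|D^T|$ indeed depend on $T$ only through $T \cap Q$ and $T \cap R$.
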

\begin{proof}
 It is enough to prove the formula for basis functions $f \in \cY_n$. If $f = \chi_B$ where $B \cap [m] = \emptyset$ then $f^\pi = f$ for all $\pi \in S_m$ since $A < B$ iff $A^\pi < B$, and so $g = f$. Suppose next that $f = \chi_B$ where $B$ intersects $[m]$, say $b_j \in [m]$. For $a < b_j$ not belonging to $B$, define
\[ \chi_{a,B} = \sum_{\substack{A \in \cS_{n,|B|}\colon \\ A < B, a_j = a}} \chi_{A,B}. \]
 Since $\chi_B = \sum_{a \in [b_j] \setminus B} \chi_{a,B}$, it is enough to show that $\EE_{\pi \in S_m}[\chi_{a,B}^\pi]$ vanishes. Since the mapping $\pi \mapsto (a\;b_j) \pi$ is an involution on $S_m$, it is enough to notice that $\EE[\chi_{a,B}^{(a\;b_j) \pi}] = -\EE[\chi_{a,B}^\pi]$.
\end{proof}

\section{Slices of the Boolean hypercube} \label{sec:slice}

Harmonic multilinear polynomials appear naturally in the context of \emph{slices of the Boolean hypercube}.

\begin{definition} \label{def:slice}
Let $n$ be an integer and let $k \leq n/2$ be an integer. The \emph{$(n,k)$ slice of the Boolean hypercube} is
\[ \binom{[n]}{k} = \{(x_1,\ldots,x_n) \in \{0,1\}^n : \sum_{i=1}^n x_i = k\}. \]
We also identify $\binom{[n]}{k}$ with subsets of $[n]$ of cardinality $k$. We endow the slice $\binom{[n]}{k}$ with the uniform measure, which is clearly exchangeable.

A \emph{function over the slice} is a function $f\colon \binom{[n]}{k} \to \RR$. Every function $f\colon \RR^n \to \RR$ can be interpreted as a function on the slice in the natural way.
\end{definition}

We proceed to show that $\cY_{n,0} \cup \cdots \cup \cY_{n,k}$ is an orthogonal basis for the slice $\binom{[n]}{k}$.

\begin{theorem} \label{thm:slice-basis}
Let $n$ and $k \leq n/2$ be integers, and put $p = k/n$. The set $\{ \chi_B : B \in \cB_{n,d} \text{ for some } d \leq k \}$ is an orthogonal basis for the vector space of functions on the slice, and for $B \in \cB_{n,d}$,
\[ \|\chi_B\|^2 = c_B 2^d \frac{k^{\underline{d}} (n-k)^{\underline{d}}}{n^{\underline{2d}}} = c_B (2p(1-p))^d \left(1 \pm O_p\left(\frac{d^2}{n}\right)\right). \]
\end{theorem}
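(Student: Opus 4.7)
The plan is to combine the orthogonality result of Theorem~\ref{thm:orthogonal} (applied to the uniform measure on the slice, which is clearly permutation-invariant) with a direct computation of $\|\chi_d\|^2$ on the slice, and then close the argument with a dimension count. Lemma~\ref{lem:top-sets} gives $|\cB_{n,d}| = \binom{n}{d} - \binom{n}{d-1}$, and summing over $d = 0, 1, \ldots, k$ telescopes to $\binom{n}{k}$, which is exactly the dimension of the space of real-valued functions on $\binom{[n]}{k}$. Since the $\chi_B$ are pairwise orthogonal with respect to the uniform measure on the slice by Theorem~\ref{thm:orthogonal}, once we verify that each $\|\chi_B\|$ is nonzero the family is automatically linearly independent, and the dimension count promotes it to a basis.

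To compute the norm I would invoke Theorem~\ref{thm:norms} to reduce to evaluating $\|\chi_d\|^2 = \EE[(x_1 - x_2)^2 (x_3 - x_4)^2 \cdots (x_{2d-1}-x_{2d})^2]$ under the uniform measure on the slice. Because $x_i \in \{0,1\}$, each factor $(x_{2i-1}-x_{2i})^2$ is the indicator that $\{x_{2i-1}, x_{2i}\} = \{0,1\}$, so the whole product is the indicator that $|S \cap \{2i-1, 2i\}| = 1$ for every $i \in [d]$. Counting such sets $S \in \binom{[n]}{k}$ by first picking one element from each of the $d$ forced pairs and then completing with $k-d$ elements from the remaining $n-2d$ coordinates yields $2^d \binom{n-2d}{k-d}$. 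Dividing by $\binom{n}{k}$ and rewriting in falling powers gives
\[ \|\chi_d\|^2 = 2^d \cdot \frac{k^{\underline{d}}(n-k)^{\underline{d}}}{n^{\underline{2d}}}, \]
which is strictly positive for $d \leq k$. Multiplying by $c_B$ produces the first equality in the theorem and simultaneously closes the argument that $\cY_{n,0} \cup \cdots \cup \cY_{n,k}$ is a basis.

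For the asymptotic expression, I would pair the $2d$ descending factors of $n^{\underline{2d}}$ and write
\[ \frac{k^{\underline{d}}(n-k)^{\underline{d}}}{n^{\underline{2d}}} = \prod_{i=0}^{d-1} \frac{(k-i)(n-k-i)}{(n-2i)(n-2i-1)}. \]
A one-line expansion using $k = pn$ gives $(k-i)/(n-2i) = p\bigl(1+O_p(i/n)\bigr)$ and $(n-k-i)/(n-2i-1) = (1-p)\bigl(1+O_p(i/n)\bigr)$, so each of the $d$ factors equals $p(1-p)\bigl(1+O_p(i/n)\bigr)$. Telescoping the multiplicative errors $\sum_{i=0}^{d-1} i/n = O(d^2/n)$ yields the claimed factor $1 \pm O_p(d^2/n)$. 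The only point that requires minor care is the book-keeping between polynomials in $\pH_n$ and their restrictions to the slice: the inner product $\langle f, g \rangle = \EE_{\mu}[fg]$ is well-defined on functions on the slice, and nonvanishing of the norms is precisely what upgrades the orthogonality of Theorem~\ref{thm:orthogonal} to linear independence after restriction.
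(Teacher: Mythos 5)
Your proposal is correct and follows essentially the same route as the paper: orthogonality from Theorem~\ref{thm:orthogonal}, the dimension count via Lemma~\ref{lem:top-sets}, reduction to $\|\chi_d\|^2$ via Theorem~\ref{thm:norms}, and the same indicator computation giving $2^d\binom{n-2d}{k-d}/\binom{n}{k}$. The only cosmetic difference is in the asymptotic estimate, where you pair the factors of $n^{\underline{2d}}$ term by term while the paper factors out $(2p(1-p))^d$ from the whole ratio; both give the stated $1 \pm O_p(d^2/n)$.
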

\begin{proof}
We prove the formula and estimate for $\|\chi_B\|^2$ below. When $d \leq k$, the formula shows that $\|\chi_B\|^2 \neq 0$ and so $\chi_B \neq 0$ as a function on the slice. Hence the set $\{ \chi_B : B \in \cB_{n,d} \text{ for some } d \leq k \}$ consists of non-zero mutually orthogonal vectors. Lemma~\ref{lem:top-sets} shows that the number of vectors in this set is $\binom{n}{k}$, matching the dimension of the vector space of functions on the slice. Hence this set constitutes a basis for the vector space of functions on the slice.

In order to prove the formula for $\|\chi_B\|^2$, we need to compute
\[ \|\chi_d\|^2 = \EE\left[\prod_{i=1}^d (x_{2i-1} - x_{2i})^2 \right]. \]
The quantity $\prod_{i=1}^d (x_{2i-1} - x_{2i})^2$ is non-zero for a subset $S \in \binom{[n]}{k}$ if $S$ contains exactly one out of each pair $\{x_{2i-1},x_{2i}\}$, in which case the quantity has value $1$. Therefore
\begin{align*}
\|\chi_d\|^2 &= \Pr_{S \in \binom{[n]}{k}} [|S \cap \{x_{2i-1},x_{2i}\}| = 1 \text{ for all } i \in [d]] \\ &= 2^d \Pr_{S \in \binom{[n]}{k}} [x_1,x_3,\ldots,x_{2d-1} \in S \text{ and } x_2,x_4,\ldots,x_{2d} \notin S] \\ &= 2^d \frac{\binom{n-2d}{k-d}}{\binom{n}{k}} = 2^d \frac{(n-2d)!k!(n-k)!}{(k-d)!(n-k-d)!n!} = 2^d \frac{k^{\underline{d}} (n-k)^{\underline{d}}}{n^{\underline{2d}}}.
\end{align*}
This yields the formula for $\|\chi_B\|^2$. We can estimate this expression as follows:
\begin{align*}
2^d \frac{k^{\underline{d}} (n-k)^{\underline{d}}}{n^{\underline{d}}} &= (2p(1-p))^d \frac{\left(1-\frac{1}{k}\right)\left(1-\frac{2}{k}\right)\cdots\left(1-\frac{d-1}{k}\right) \cdot \left(1-\frac{1}{n-k}\right) \left(1-\frac{2}{n-k}\right) \cdots \left(1-\frac{d-1}{n-k}\right)}{\left(1-\frac{1}{n}\right) \left(1-\frac{2}{n}\right) \cdots \left(1-\frac{2d-1}{n}\right)} \\ &= (2p(1-p))^d \frac{\left(1 - \frac{O(d^2)}{k}\right) \left(1 - \frac{O(d^2)}{n-k}\right)}{\left(1 - \frac{O(d^2)}{n}\right)},
\end{align*}
implying the stated estimate.
\end{proof}

The theorem shows that every function on $\binom{[n]}{k}$ can be represented uniquely as a harmonic multilinear polynomial of degree at most $k$ (see Filmus and Mossel~\cite{FM} for a basis-free proof).
The quantity $(2p(1-p))^d$ is the squared norm of $\chi_d$ under the $\mu_p$ measure (defined below) over the entire Boolean hypercube, and this allows us to lift low-degree functions from the slice to the entire Boolean hypercube while preserving properties such as the expectation and variance.

\begin{theorem} \label{thm:slice-cube}
Let $n$ and $k \leq n/2$ be integers, and put $p = k/n$. Let $f\colon \binom{[n]}{k} \to \RR$ be a function whose representation as a harmonic multilinear polynomial has degree $d$, and define $\tilde{f}\colon \{0,1\}^{[n]} \to \RR$ by interpreting this polynomial over $\{0,1\}^{[n]}$.

If we endow $\{0,1\}^{[n]}$ with the measure $\mu_p$ given by $\mu_p(x_1,\ldots,x_n) = p^{\sum_{i=1}^n x_i} (1-p)^{\sum_{i=1}^n (1-x_i)}$ then
\[ \EE[\tilde{f}] = \EE[f], \qquad \|\tilde{f}\|^2 = \|f\|^2 \left(1 \pm O_p\left(\frac{d^2}{n}\right)\right), \qquad \VV[\tilde{f}] = \VV[f] \left(1 \pm O_p\left(\frac{d^2}{n}\right)\right). \]
\end{theorem}
\begin{proof}
The definition of $\tilde{f}$ directly implies that for all $B \in \cB_n$, $\hat{\tilde{f}}(B) = \hat{f}(B)$, and furthermore $\hat{f}(B) = 0$ for $|B| > d$.
According to Lemma~\ref{lem:moments}, $\EE[\tilde{f}] = \hat{\tilde{f}}(\emptyset) = \hat{f}(\emptyset) = \EE[f]$. In order to prove the estimate on the norms, we compute the squared norm of $\chi_\ell$ with respect to $\mu_p$:
\[ \|\chi_\ell\|^2_{\mu_p} = \EE_{\mu_p}\left[\prod_{i=1}^\ell (x_{2i-1}-x_{2i})^2\right] = (2p(1-p))^\ell. \]
Denoting by $\sigma$ the uniform measure on $\binom{[n]}{k}$, Lemma~\ref{lem:moments} and Theorem~\ref{thm:slice-basis} imply that
\begin{align*}
 \|\tilde{f}\|^2 &= \sum_{B \in \cB_n} \hat{\tilde{f}}(B) \|\chi_{|B|}\|^2_{\mu_p} =
 \sum_{B \in \cB_n} \hat{f}(B) (2p(1-p))^{|B|}, \\
 \|f\|^2 &= \sum_{B \in \cB_n} \hat{f}(B) \|\chi_{|B|}\|^2_{\sigma} =
 \sum_{B \in \cB_n} \hat{f}(B) (2p(1-p))^{|B|} \left(1 \pm O_p\left(\frac{d^2}{n}\right)\right),
\end{align*}
implying the estimate on the norms.
The estimate on the variance follows analogously.
\end{proof}

\paragraph{Johnson association scheme}
The Johnson association scheme is an association scheme whose underlying set is $\binom{[n]}{k}$. Instead of describing the scheme itself, we equivalently describe its Bose--Mesner algebra.

\begin{definition} \label{def:johnson}
Let $n,k$ be integers such that $k \leq n/2$.
A square matrix $M$ indexed by $\binom{[n]}{k}$ belongs to the \emph{Bose--Mesner algebra of the $(n,k)$ Johnson association scheme} if $M_{S,T}$ depends only on $|S \cap T|$.
\end{definition}

While it is not immediately obvious, the Bose--Mesner algebra is indeed an algebra of matrices, that is, it is closed under multiplication. Furthermore, it is a commutative algebra, and so all matrices have common eigenspaces. In particular, the algebra is spanned by a basis of primitive idempotents $J_0,\ldots,J_k$. As the following lemma shows, these idempotents correspond to the bases $\cY_{n,0},\ldots,\cY_{n,k}$.

\begin{lemma} \label{lem:idempotents}
Let $n,k$ be integers such that $k \leq n/2$, and let $M$ belong to the Bose--Mesner algebra of the $(n,k)$ Johnson association scheme. The bases $\cY_{n,0},\ldots,\cY_{n,k}$ span the eigenspaces of $M$.
\end{lemma}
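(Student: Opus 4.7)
The plan is to exhibit one particular operator $L$ in the Bose--Mesner algebra whose eigenspaces are exactly the subspaces $V_d := \operatorname{span}(\cY_{n,d})$, and then to argue by a dimension count that $L$ already generates the entire algebra. Every $M$ in the algebra will then be a polynomial in $L$, and hence will act as a scalar on each $V_d$, which is what the lemma asserts.

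Concretely, I will work with
\[ L = \binom{n}{2} I - \sum_{1 \le i < j \le n} (i\;j), \]
where $(i\;j)$ acts on functions on the slice by $f \mapsto f^{(i\;j)}$. Lemma~\ref{lem:total-influence} with $m = n$ gives immediately $L\chi_B = |B|(n+1-|B|)\,\chi_B$ for every $B$, and since $d \mapsto d(n+1-d)$ is strictly increasing on $[0, n/2]$ and each $d \in \{0,\ldots,k\}$ satisfies $d \le k \le n/2$, these $k+1$ values are pairwise distinct. Combined with Theorem~\ref{thm:slice-basis}, this identifies $V_0, \ldots, V_k$ as precisely the $k+1$ eigenspaces of $L$. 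That $L$ itself belongs to the Bose--Mesner algebra is a short direct check: counting how many transpositions take a given $S$ to a given $T$ shows that $L_{S,T}$ depends only on $|S \cap T|$, equaling $\binom{n}{2} - \binom{k}{2} - \binom{n-k}{2}$ when $S = T$, equaling $-1$ when $|S \cap T| = k - 1$, and vanishing otherwise.

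To close the argument I will note that the Bose--Mesner algebra has the explicit basis of distance matrices $(A_i)_{S,T} = \mathbf{1}[|S \cap T| = i]$ for $i = 0,\ldots,k$, so it has dimension $k+1$; meanwhile the subalgebra $\RR[L]$ also has dimension $k+1$, since $L$ is self-adjoint (transpositions are self-adjoint under the uniform measure) and has $k+1$ distinct eigenvalues. The inclusion $\RR[L] \subseteq$ Bose--Mesner algebra is therefore an equality, so every $M$ in the algebra is a polynomial $p(L)$, and hence $M$ acts on $V_d$ as the scalar $p(d(n+1-d))$. The only place real work happens is the appeal to Lemma~\ref{lem:total-influence}, which is itself the payoff of Sections~\ref{sec:polynomials} and~\ref{sec:influences}; beyond that I foresee no technical obstacle.
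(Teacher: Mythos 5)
Your proposal is correct and follows essentially the same route as the paper: both single out the operator $f \mapsto \sum_{1 \le i < j \le n} f^{(i\;j)}$ (yours is just the affine shift $\binom{n}{2}I$ minus this), verify membership in the algebra by counting transpositions, and read off from Lemma~\ref{lem:total-influence} that the $\cY_{n,d}$ span its $k+1$ eigenspaces with distinct eigenvalues. The only difference is the final step: the paper invokes the standard fact that all matrices in the commutative Bose--Mesner algebra share common eigenspaces, whereas you make this self-contained by the dimension count $\dim \RR[L] = k+1 = \dim$ of the algebra (which still quietly uses closure of the algebra under multiplication, a fact the paper also asserts without proof), so every $M$ is a polynomial in $L$ and acts as a scalar on each $V_d$.
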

\begin{proof}
Since all matrices in the Bose--Mesner algebra have the same eigenspaces, it is enough to consider a particular matrix in the algebra which has $k+1$ distinct eigenvalues. Let $M$ be the matrix corresponding to the linear operator
\[ M\colon f \mapsto \sum_{1 \leq i < j \leq n} f^{(i\;j)}. \]
More explicitly, it is not hard to calculate that $M(S,S) = \binom{k}{2} + \binom{n-k}{2}$, $M(S,T) = 1$ if $|S \cap T| = k-1$, and $M(S,T) = 0$ otherwise. In particular, $M$ belongs to the Bose--Mesner algebra. Lemma~\ref{lem:total-influence}, which we prove in Section~\ref{sec:influences}, shows that for $d \leq k$, the subspace spanned by $\cY_{n,d}$ is an eigenspace of $M$ corresponding to the eigenvalue $\binom{n}{2} - d(n+1-d)$. All eigenvalues are distinct (since the maximum of the parabola $d(n+1-d)$ is at $d = (n+1)/2$), completing the proof.
\end{proof}

As an immediate corollary, we deduce that $\cY_d$ is an orthogonal basis for the $d$th eigenspace in the $(n,k)$ Johnson graph and $(n,k)$ Kneser graph, as well as any other graph on $\binom{[n]}{k}$ in which the weight on an edge $(S,T)$ depends only on $|S \cap T|$. In the Johnson graph, two sets are connected if their intersection has size $k-1$, and in the Kneser graph, two sets are connected if they are disjoint.

\medskip

The slice $\binom{[n]}{k}$ can be identified as the set of cosets of $S_k \times S_{n-k}$ inside $S_n$. Bannai and Ito~\cite{BannaiIto}, following Dunkl~\cite{Dunkl78,Dunkl79}, use this approach to determine the idempotents of the Johnson association scheme from the representations of the symmetric group $S_n$. They obtain the idempotent $J_d$ from the representation corresponding to the partition $(n-d)+d$. The basis $\cY_{n,d}$ can be derived from Young's orthogonal representation corresponding to the partition $(n-d)+d$, but we do not develop this connection here; such a construction appears in Srinivasan~\cite{Srinivasan}.

\section{Influences} \label{sec:influences}

Throughout this section, we fix some arbitrary exchangeable measure. All inner products and norms are with respect to this measure.

One of the most important quantities arising in the analysis of functions on the hypercube is the influence. In this classical case, influence is defined with respect to a single coordinate. In our case, the basic quantity is the influence of a \emph{pair} of coordinates.

\begin{definition} \label{def:influence}
Let $f \in \pH_n$. For $i,j \in [n]$, define a function $f^{(i\;j)} \in \pH_n$ by $f^{(i\;j)}(x) = f(x^{(i\;j)})$, where $x^{(i\;j)}$ is obtained by switching $x_i$ and $x_j$. The \emph{influence} of the pair $(i,j)$ is
\[ \Inf_{ij}[f] = \tfrac{1}{2} \|f^{(i\;j)} - f\|^2. \]
The $m$th \emph{total influence} of the function is
\[ \Inf^m[f] = \frac{1}{m} \sum_{1 \leq i < j \leq m} \Inf_{ij}[f]. \]
When $m = n$, we call the resulting quantity the \emph{total influence}, denoted $\Inf[f]$.
\end{definition}

We start with a triangle inequality for influences (cf.~\cite[Lemma 5.4]{Wimmer} for the Boolean case, in which the constant $\tfrac{9}{2}$ can be improved to $\tfrac{3}{2}$).

\begin{lemma} \label{lem:triangle}
Let $f \in \pH_n$. For distinct $i,j,k \in [n]$ we have
\[ \Inf_{ij}[f] \leq \tfrac{9}{2}(\Inf_{ik}[f] + \Inf_{jk}[f]). \]
\end{lemma}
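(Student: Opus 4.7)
The strategy is to exploit the conjugation identity $(i\;j) = (i\;k)(j\;k)(i\;k)$ together with the permutation-invariance of $\|\cdot\|$, in a two-stage triangle-inequality argument followed by a symmetrization. A direct three-step telescoping along the factorization above turns out to be useless, because the outer transposition, after being pulled back by permutation-invariance, gives exactly $\sqrt{2\Inf_{ij}[f]}$ again; the right move is to group the inner two transpositions into a 3-cycle and telescope that separately.

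For the first stage, I would write $f^{(i\;j)} - f = (f^{(i\;j)} - f^{(i\;k)}) + (f^{(i\;k)} - f)$. The second summand has norm $\sqrt{2\Inf_{ik}[f]}$. For the first summand, using $\|u-v\| = \|u^{(i\;k)} - v^{(i\;k)}\|$ together with the composition rule $(f^\sigma)^\tau = f^{\tau\sigma}$, I would rewrite $\|f^{(i\;j)} - f^{(i\;k)}\| = \|f^{(i\;j\;k)} - f\|$, where $(i\;j\;k) = (i\;k)(i\;j)$ is the 3-cycle $i \to j \to k \to i$.

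For the second stage, I would further factor the 3-cycle as $(i\;j\;k) = (j\;k)(i\;k)$, apply the triangle inequality $\|f^{(i\;j\;k)} - f\| \le \|(f^{(i\;k)})^{(j\;k)} - f^{(j\;k)}\| + \|f^{(j\;k)} - f\|$, and once more use permutation-invariance to peel off the outer $(j\;k)$ from the first summand, leaving $\|f^{(i\;k)} - f\|$. Combining the two stages yields
\[
\sqrt{2\Inf_{ij}[f]} \;\le\; 2\sqrt{2\Inf_{ik}[f]} + \sqrt{2\Inf_{jk}[f]}.
\]

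Squaring and applying AM-GM ($2\sqrt{ab} \le a+b$) to the cross term gives the asymmetric bound $\Inf_{ij}[f] \le 6\Inf_{ik}[f] + 3\Inf_{jk}[f]$. Running the parallel argument with $i$ and $j$ swapped (equivalently, starting from the conjugate identity $(i\;j) = (j\;k)(i\;k)(j\;k)$) produces the mirror bound $\Inf_{ij}[f] \le 3\Inf_{ik}[f] + 6\Inf_{jk}[f]$, and averaging the two gives the claimed $\tfrac{9}{2}$. The only delicate point is the permutation bookkeeping, i.e.\ verifying that the correct 3-cycle appears after each invocation of permutation-invariance; no deeper obstacle is expected.
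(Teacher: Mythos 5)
Your proof is correct and is essentially the paper's argument: both exploit $(i\;j)=(i\;k)(j\;k)(i\;k)$, telescope into three increments, use permutation-invariance to reduce each increment to $\|f-f^{(i\;k)}\|$ or $\|f-f^{(j\;k)}\|$, arrive at $\Inf_{ij}[f]\le 6\Inf_{ik}[f]+3\Inf_{jk}[f]$, and average with the mirror bound. (Your aside that a direct three-step telescoping is ``useless'' is mistaken --- that is exactly what the paper does, and each increment pulls back correctly --- but your two-stage variant via the $3$-cycle lands in the same place, with AM--GM on the cross term playing the role of the paper's $(a+b+c)^2\le 3(a^2+b^2+c^2)$.)
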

\begin{proof}
The Cauchy--Schwartz inequality implies that $(a+b+c)^2 \leq 3(a^2+b^2+c^2)$. Since $(i\;j) = (i\;k)(j\;k)(i\;k)$,
\begin{align*}
\Inf_{ij}[f] &= \tfrac{1}{2} \|f - f^{(i\;k)(j\;k)(i\;k)}\|^2 \\&\leq
3(\tfrac{1}{2}\|f - f^{(i\;k)}\|^2 + \tfrac{1}{2}\|f^{(i\;k)} - f^{(j\;k)(i\;k)}\|^2 + \tfrac{1}{2}\|f^{(j\;k)(i\;k)} - f^{(i\;k)(j\;k)(i\;k)}\|^2 \\&=
6\Inf_{ik}[f] + 3\Inf_{jk}[f].
\end{align*}
The lemma is obtained by averaging with the similar inequality $\Inf_{ij}[f] \leq 6\Inf_{jk}[f] + 3\Inf_{ik}[f]$ obtained using $(i\;j) = (j\;k)(i\;k)(j\;k)$.
\end{proof}

As a consequence of the triangle inequality, we can identify a set of ``important'' coordinates for functions with low total influence (cf.~\cite[Proposition 5.3]{Wimmer}).

\begin{lemma} \label{lem:important}
Let $f \in \pH_n$. For every $\tau > 0$ there exists a set $S \subseteq [n]$ of size $O(\Inf[f]/\tau)$ such that $\Inf_{ij}[f] < \tau$ whenever $i,j \notin S$.
\end{lemma}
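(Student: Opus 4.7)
The plan is to single out coordinates that carry most of the pair-influence mass. For $i \in [n]$, define the per-coordinate total
\[ L_i = \sum_{j \in [n] \setminus \{i\}} \Inf_{ij}[f], \]
which by double counting satisfies $\sum_{i=1}^n L_i = 2 \sum_{i<j} \Inf_{ij}[f] = 2n \Inf[f]$. I would take $S = \{i \in [n] : L_i \geq T\}$ for a threshold $T = \Theta(n\tau)$ chosen below; then Markov immediately gives $|S| \leq 2n\Inf[f]/T$, which will be $O(\Inf[f]/\tau)$ as required.

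The key step is to argue that a large $\Inf_{ij}[f]$ forces one of $L_i, L_j$ to be large. For this I would average Lemma~\ref{lem:triangle} over the witness coordinate. Fix distinct $i,j$ and apply $\Inf_{ij}[f] \leq \tfrac{9}{2}(\Inf_{ik}[f] + \Inf_{jk}[f])$ for every $k \in [n] \setminus \{i,j\}$; averaging and using $\sum_{k \neq i,j} \Inf_{ik}[f] = L_i - \Inf_{ij}[f]$ (and similarly for $j$), I obtain
\[ \Inf_{ij}[f] \leq \frac{9}{2(n-2)}\bigl(L_i + L_j - 2\Inf_{ij}[f]\bigr), \]
which rearranges to
\[ \Inf_{ij}[f] \leq \frac{9(L_i + L_j)}{2(n+7)}. \]

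Choosing $T = \tfrac{n+7}{9}\tau$, any $i,j \notin S$ satisfy $L_i + L_j < 2T$, and the displayed inequality then gives $\Inf_{ij}[f] < \tau$. The size bound becomes $|S| \leq 2n\Inf[f]/T \leq 18\Inf[f]/\tau$, which is $O(\Inf[f]/\tau)$. The degenerate regimes $n \leq 2$, where the averaging is vacuous, are handled by taking $S = [n]$.

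I do not anticipate any genuine obstacle. The only conceptual move is the averaging of the triangle inequality, which converts a witness-dependent local bound into a bound depending purely on the per-coordinate totals $L_i$, at the cost of an extra factor of $n$ that is then paid for by the global budget $\sum_i L_i = 2n\Inf[f]$; the mildly self-referential inequality produced by averaging is handled by a one-line rearrangement.
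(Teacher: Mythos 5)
Your proof is correct, but it reaches the conclusion by a genuinely different route than the paper. The paper's proof is graph-theoretic: it forms the graph on $[n]$ with an edge $\{i,j\}$ whenever $\Inf_{ij}[f] \geq \tau$, takes a maximal matching $M$, applies Lemma~\ref{lem:triangle} to each matched edge to show that every matched pair forces $\Inf_{ik}[f]+\Inf_{jk}[f] = \Omega(\tau)$ for all $k$, sums to get $|M| = O(\Inf[f]/\tau)$, and takes $S$ to be the set of endpoints of $M$, which is a vertex cover and hence hits every heavy pair. You instead threshold on the marginal totals $L_i$ and convert the pointwise triangle inequality into the averaged bound $\Inf_{ij}[f] \leq \tfrac{9}{2(n+7)}(L_i+L_j)$, which shows directly that a heavy pair forces a heavy endpoint; Markov's inequality applied to $\sum_i L_i = 2n\Inf[f]$ (correctly tracking the $1/n$ normalization in Definition~\ref{def:influence}) then bounds $|S|$. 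Both arguments use Lemma~\ref{lem:triangle} as the sole nontrivial input and yield essentially the same constant ($|S| \leq 18\Inf[f]/\tau$). Your version has the modest advantage that $S$ is defined canonically from $f$ rather than via an arbitrary choice of maximal matching, at the cost of the small self-referential rearrangement, which you handle correctly. For the degenerate case $n \leq 2$ you should note that taking $S=[n]$ is only admissible because a heavy pair forces $\Inf[f] \geq \tau/2$ and hence $n \leq 4\Inf[f]/\tau$; otherwise take $S = \emptyset$. This is immediate, so there is no gap.
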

\begin{proof}
Construct a graph $G$ on the vertex set $[n]$ in which two vertices $i,j$ are connected if $\Inf_{ij}[f] \geq \tau$. Let $M$ be a maximal matching in $G$. For each $(i,j) \in M$, Lemma~\ref{lem:triangle} shows that for all $k \neq i,j$, $\Inf_{ik}[f] + \Inf_{jk}[f] = \Omega(\tau)$. Summing over all edges in $M$ we obtain $\Inf[f] = \Omega(\tau |M|)$ and so $|M| = O(\Inf[f]/\tau)$. The endpoints of $M$ form a vertex cover $S$ in $G$ of size $2|M| = O(\Inf[f]/\tau)$. Since $S$ is a vertex cover, whenever $\Inf_{ij}[f] \geq \tau$ then $(i,j)$ is an edge and so either $i \in S$ or $j \in S$ (or both). It follows that $\Inf_{ij}[f] < \tau$ whenever $i,j \notin S$.
\end{proof}

Our goal in the rest of this section is to give a formula for $\Inf^m[f]$. Our treatment closely follows Wimmer~\cite{Wimmer}. We start with a formula for $f^{(m\;m+1)}$.

\begin{lemma} \label{lem:transposition}
Let $f \in \pH_n$. For $m \in [n-1]$,
\begin{align*}
f^{(m\;m+1)} = \sum_{\substack{B \in \cB_n\colon \\ m,m+1 \notin B \text{ or} \\ m,m+1 \in B}} \hat{f}(B) \chi_B
&+ \sum_{\substack{B \in \cB_n\colon\\ b_{i+1} = m, m+1 \notin B}} \left(\frac{1}{m-2i} \hat{f}(B) + \frac{m-2i+1}{m-2i} \hat{f}(B^{(m\;m+1)})\right) \chi_B \\
&+ \sum_{\substack{B \in \cB_n\colon\\ b_{i+1} = m+1, m \notin B}} \left(-\frac{1}{m-2i} \hat{f}(B) + \frac{m-2i-1}{m-2i} \hat{f}(B^{(m\;m+1)})\right) \chi_B.
\end{align*}
It might happen that $B \in \cB_n$ but $B^{(m\;m+1)} \notin \cB_n$, but in that case, the coefficient in front of $\hat{f}(B^{(m\;m+1)})$ vanishes.
\end{lemma}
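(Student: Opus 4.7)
The plan is to reduce the lemma to a per-basis-element computation: since $f^{(m\;m+1)} = \sum_{B \in \cB_n} \hat{f}(B)\, \chi_B^{(m\;m+1)}$, it suffices to expand each $\chi_B^{(m\;m+1)}$ in the Young basis and collect coefficients. I handle three cases based on how $B$ meets $\{m, m+1\}$; the stated coefficients of $\hat f(B)$ and $\hat f(B^{(m\;m+1)})$ then drop out by reading off the coefficient of each $\chi_B$ on the right.

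\textbf{Case A ($|B \cap \{m,m+1\}| \in \{0,2\}$).} I claim $\chi_B^{(m\;m+1)} = \chi_B$. If $m, m+1 \notin B$, the map $A \mapsto A^{(m\;m+1)}$ is a bijection on the valid sequences $\{A < B : A \cap B = \emptyset\}$, because $b_\ell \notin \{m,m+1\}$ forces $b_\ell < m$ or $b_\ell > m+1$ and the swap preserves $a_\ell < b_\ell$. If $m, m+1 \in B$, they must occupy consecutive positions $b_j = m,\ b_{j+1} = m+1$; pairing each $A$ with the sequence $A'$ obtained by swapping its entries at positions $j, j+1$ yields
\[
\chi_{A,B} + \chi_{A',B} = \Omega \cdot \bigl[2 x_{a_j} x_{a_{j+1}} - (x_m + x_{m+1})(x_{a_j} + x_{a_{j+1}}) + 2 x_m x_{m+1}\bigr],
\]
where $\Omega$ is a product involving neither $x_m$ nor $x_{m+1}$. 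The pair sum is manifestly symmetric in $x_m \leftrightarrow x_{m+1}$, giving $\chi_B^{(m\;m+1)} = \chi_B$.

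\textbf{Case B ($m = b_{i+1},\ m+1 \notin B$).} Let $B' = B^{(m\;m+1)}$, which is a top set. Using $\chi_{A,B}^{(m\;m+1)} = \chi_{A^{(m\;m+1)}, B'}$ and classifying each $A < B$ by whether $m+1 \in A$ (noting that $m \in B$ forces $m \notin A$), I identify $\sum_{A < B} \chi_{A^{(m\;m+1)}, B'}$ with the sub-sum of $\chi_{B'} = \sum_{A' < B'} \chi_{A', B'}$ consisting of $A'$ with $m \notin A'$. Thus
\[
\chi_B^{(m\;m+1)} = \chi_{B'} - E, \qquad E = \sum_{\substack{A' < B' \\ a'_{i+1} = m}} \chi_{A', B'},
\]
and the stated expansion for $\chi_B^{(m\;m+1)}$ is equivalent to the identity $(m-2i)\, E = \chi_{B'} - \chi_B$. \textbf{This identity is the main obstacle of the proof.} To establish it, I split $\chi_B$ and $\chi_{B'}$ by the value at position $i+1$ and by whether the remaining entries use $m+1$ or $m$; the pointwise identity
\[
(x_a - x_{m+1})(x_m - x_{b_j}) - (x_a - x_m)(x_{m+1} - x_{b_j}) = (x_m - x_{m+1})(x_a - x_{b_j})
\]
pairs all the cross-terms and reduces the claim to a sum identity of the form $\sum_{a \in [m-1]\setminus B}(R_a + V_a) = (m-2i-1)\, R_m$ for auxiliary inner sums $R_a, V_a, R_m$. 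The key combinatorial step is that for each position $j' \leq i$, expanding $R_m$ by the value of its $j'$-th factor gives $R_m = \sum_{a \in [b_{j'}-1]\setminus B}(x_a - x_{b_{j'}})\, W_a^{(j')}$; summing over the $i$ such positions supplies the $i \cdot R_m$ needed to close the identity.

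\textbf{Case C ($m+1 = b_{i+1},\ m \notin B$).} If $m-2i \geq 2$, then $B^{(m\;m+1)}$ is a top set of Case-B form; applying $(m\;m+1)$ to the Case-B identity for $B^{(m\;m+1)}$ and solving the resulting $2\times 2$ linear relation for $\chi_B^{(m\;m+1)}$ produces the stated coefficients. When $m-2i = 1$, so $m = 2i+1$ and $B^{(m\;m+1)} \notin \cB_n$, the top-set inequalities $b_j \geq 2j$ combined with $b_{i+1} = 2i+2$ and $m \notin B$ force $b_j \in \{2j, \ldots, 2i\}$ for $j \leq i$; every $A < B$ then satisfies $\{a_1, \ldots, a_{i+1}\} = [2i+1] \setminus \{b_1, \ldots, b_i\}$, and the bound $a_j < b_j \leq 2i$ for $j \leq i$ excludes $a_j = 2i+1$ there, forcing $a_{i+1} = m$. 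Hence $\chi_B = (x_m - x_{m+1})\tilde\chi$ with $\tilde\chi$ free of $x_m, x_{m+1}$, so $\chi_B^{(m\;m+1)} = -\chi_B$, matching the formula since its $\chi_{B^{(m\;m+1)}}$ coefficient $\frac{m-2i-1}{m-2i}$ vanishes. Collecting the coefficient of each $\chi_B$ across the three cases yields the lemma.
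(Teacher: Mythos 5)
Your overall architecture is sound and, up to reorganization, is the same as the paper's: the two easy cases ($|B\cap\{m,m+1\}|\in\{0,2\}$) are handled by the same bijections, and your reduction of the hard case to the single identity $(m-2i)E=\chi_{B^\pi}-\chi_B$, with $E=\sum_{A<B^\pi,\,a_{i+1}=m}\chi_{A,B^\pi}$, is exactly the paper's equation~\eqref{eq:diff2}; your direct verification of the degenerate case $m=2i+1$ (showing $\chi_B^{(m\;m+1)}=-\chi_B$ outright) is a correct and pleasant alternative to the paper's convention $\chi_B=0$ for non-top-sets. Two problems remain, one cosmetic and one substantive. The cosmetic one: the image of $A\mapsto A^{(m\;m+1)}$ on $\{A<B\}$ is \emph{not} $\{A'<B^\pi: m\notin A'\}$ as you state --- sequences $A$ containing $m+1$ at some position $j+1\neq i+1$ map to sequences containing $m$ at that position --- the correct characterization is $\{A'<B^\pi: a'_{i+1}\neq m\}$. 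Your displayed formula $\chi_B^{(m\;m+1)}=\chi_{B^\pi}-E$ is consistent with the correct characterization but not with the prose, so the bijection needs to be restated and actually checked.

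The substantive problem is that the identity $(m-2i)E=\chi_{B^\pi}-\chi_B$, which you yourself flag as the main obstacle, is not proved: the auxiliary sums $R_a$, $V_a$, $R_m$, $W_a^{(j')}$ are never defined, and the claim that ``summing over the $i$ such positions supplies the $i\cdot R_m$ needed'' is asserted rather than established. The arithmetic is at least consistent ($|[m-1]\setminus B|=m-1-i$ terms, minus $i$ corrections, gives the required $m-1-2i$), and your pointwise identity $(x_a-x_{m+1})(x_m-x_{b_j})-(x_a-x_m)(x_{m+1}-x_{b_j})=(x_m-x_{m+1})(x_a-x_{b_j})$ is correct and is the same algebraic engine the paper uses. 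But the pairing of cross-terms is delicate because the admissible completions of a sequence depend on which of $m$, $m+1$ occupies position $j+1$, and this is precisely where the paper does its real work: it defines an explicit map $\phi$ satisfying the telescoping relation $\chi_{A,B}^{\pi}-\chi_{A,B}=\chi_{\phi(A),B^\pi}$ and then carefully counts $|\phi^{-1}(A)|=m-1-2i$, splitting the preimages into the $m-1-2i-J$ choices of a fresh entry $t$ and the $J$ choices of a swap with a later position. You need to supply the analogue of that count --- define your inner sums precisely and verify that the $V_a$ contributions and the position-$j'$ expansions of $R_m$ actually cancel --- before the hard case is closed.
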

\begin{proof}
For brevity, define $\pi = (m\;m+1)$.
We start by showing that if $B \in \cB_n$ but $B^\pi \notin \cB_n$ then the coefficient in front of $\hat{f}(B^\pi)$ vanishes. Clearly, this case can happen only if $m+1 \in B$, say $b_{i+1} = m+1$, and $m \notin B$. Since $B \in \cB_n$, $m - i = |[m+1] \setminus B| \geq i+1$. Since $B^\pi \notin \cB_n$, $m - 1 - i = |[m] \setminus B^\pi| \leq i$. We conclude that $m = 2i+1$, and indeed the corresponding coefficient is $\frac{m-2i-1}{m-2i} = 0$.

Similarly, if $b_{i+1} \in \{m,m+1\}$ for some $B \in \cB_n$ then $b_{i+1} - i - 1 = |[b_{i+1}] \setminus B| \geq i+1$, and in particular $m-2i \geq b_{i+1}-1-2i \geq 1$. This shows that the expressions appearing in the expansion of $f^\pi$ are well-defined.

Consider some $B \in \cB_n$. If $m,m+1 \notin B$ then $A < B$ iff $A^\pi < B$, and so
\[ \chi_B^\pi = \sum_{\substack{A \in \cS_n\colon\\ A < B}} \chi_{A^\pi,B} = \sum_{\substack{A \in \cS_n\colon\\ A^\pi < B}} \chi_{A,B} = \chi_B. \]
If $m,m+1 \in B$, say $b_i = m$ and $b_{i+1} = m+1$, then $\chi_{A,B}^\pi = \chi_{A',B}$, where $A'$ is obtained from $A$ by switching $a_i$ and $a_{i+1}$. Again $A < B$ iff $A' < B$, and so $\chi_B^\pi = \chi_B$ as in the preceding case.

Finally, consider some set $B \in \cB_{n,d}$ such that $B^\pi \in \cB_n$, $b_{i+1} = m$ and $m+1 \notin B$. The set $B$ doesn't necessarily belong to $\cB_n$, and in that case we define $\chi_B = 0$; under this convention, the formula $\chi_B = \sum_{A < B} \chi_{A,B}$ still holds (vacuously). We define a function $\phi$ which maps a sequence $A < B$ to a sequence $\phi(A) < B^\pi$ so that the following equation holds:
\begin{equation} \label{eq:trick}
\chi_{A,B}^\pi - \chi_{A,B}^{\vphantom{\pi}} = \chi_{\phi(A),B^\pi}.
\end{equation}
The function $\phi$ is given by
\[
\phi(A) = \begin{cases} a_1,\ldots,a_i,m,a_{i+2},\ldots,a_d & \text{if } m+1 \notin A, \\ a_1,\ldots,a_i,m,a_{i+2},\ldots,a_j,a_{i+1},a_{j+2},\ldots,a_d & \text{if } a_{j+1} = m+1. \end{cases}
\]
Since $b_{i+1} = m$ and $A < B$, in the second case necessarily $j > i$. It is not hard to verify that indeed $\phi(A) < B^\pi$.

We proceed to verify equation~\eqref{eq:trick}. Suppose first that $m+1 \notin A$. Then
\begin{align*}
\chi_{A,B}^\pi - \chi_{A,B}^{\vphantom{\pi}} &=
\prod_{k=1}^i (x_{a_k} - x_{b_k}) \times [(x_{a_{i+1}} - (m+1)) - (x_{a_{i+1}} - m)] \times \prod_{k=i+2}^d (x_{a_k} - x_{b_k}) \\ &=
\prod_{k=1}^i (x_{a_k} - x_{b_k}) \times (m - (m+1)) \times \prod_{k=i+2}^d (x_{a_k} - x_{b_k}) = \chi_{\phi(A),B^\pi}.
\end{align*}
Suppose next that $a_{j+1} = m+1$. Then
\[
\chi_{A,B}^\pi - \chi_{A,B}^{\vphantom{\pi}} = \prod_{\substack{k=1\\k\neq i+1,j+1}}^n (x_{a_k} - x_{b_k}) \times [(x_{a_{i+1}} - (m+1)) (m - x_{b_{j+1}}) - (x_{a_{i+1}} - m) (m+1 - x_{b_{j+1}})].
\]
Using $(\alpha-1)\beta - \alpha(\beta+1) = -\alpha-\beta$,
\[
(x_{a_{i+1}} - (m+1)) (m - x_{b_{j+1}}) - (x_{a_{i+1}} - m) (m+1 - x_{b_{j+1}}) = -(x_{a_{i+1}} - m) - (m - x_{b_{j+1}}) = -(x_{a_{i+1}} - x_{b_{j+1}}).
\]
Therefore
\[
\chi_{A,B}^\pi - \chi_{A,B}^{\vphantom{\pi}} = \prod_{\substack{k=1\\k\neq i+1,j+1}}^n (x_{a_k} - x_{b_k}) \times (m - (m+1)) \times (x_{a_{i+1}} - x_{b_{j+1}}) = \chi_{\phi(A),B^\pi}.
\]
This completes the proof of equation~\eqref{eq:trick}.

Every sequence in $A \in \im\phi$ satisfies $a_{i+1} = m$. Let $A < B^\pi$ be any sequence satisfying $a_{i+1} = m$. We proceed to determine $|\phi^{-1}(A)|$. For every $t \in [m-1] \setminus (A \cup B)$, the sequence $a_1,\ldots,a_i,t,a_{i+2},\ldots,a_d$ is in $\phi^{-1}(A)$. For each $j > i$ such that $a_{j+1} < m$, another sequence in $\phi^{-1}(A)$ is $a_1,\ldots,a_i,a_{j+1},a_{i+2},\ldots,a_j,m+1,a_{j+2},\ldots,a_d$. If the number of such latter indices is $J$, then $|[m-1] \setminus (A \cup B)| = m-1-2i-J$. In total, $|\phi^{-1}(A)| = m-1-2i$. We conclude that
\begin{equation} \label{eq:diff1}
\chi_B^\pi - \chi_B^{\vphantom{\pi}} = (m-2i-1) \sum_{A \in \im\phi} \chi_{A,B^\pi}.
\end{equation}

We proceed to calculate $\chi_{B^\pi} - \chi_B$. If $A < B$ then clearly $A^\pi < B^\pi$. Conversely, if $A^\pi < B^\pi$ then $A < B$ unless $a_{i+1}^\pi = m$, in which case $A^\pi \in \im\phi$. The preceding paragraph shows that every $A^\pi < B^\pi$ satisfying $a_{i+1}^\pi = m$ belongs to $\im\phi$, and so
\begin{align}
\chi_{B^\pi} - \chi_B &= \sum_{\substack{A \in \cS_{n,d}\colon \\ A < B}} (\chi_{A^\pi,B^\pi} - \chi_{A,B}) + \sum_{A \in \im\phi} \chi_{A,B^\pi} \notag \\ &=
\sum_{\substack{A \in \cS_{n,d}\colon \\ A < B}} (\chi_{A,B}^\pi - \chi_{A,B}^{\vphantom{\pi}}) + \sum_{A \in \im\phi} \chi_{A,B^\pi} =
(m-2i) \sum_{A \in \im\phi} \chi_{A,B^\pi}, \label{eq:diff2}
\end{align}
using equation~\eqref{eq:diff1}. Combining equations~\eqref{eq:diff1},\eqref{eq:diff2} together, we deduce
\[
\chi_B^\pi = \chi_B + \frac{m-2i-1}{m-2i} (\chi_{B^\pi} - \chi_B) = \frac{1}{m-2i} \chi_B + \frac{m-2i-1}{m-2i} \chi_{B^\pi}.
\]
Applying $\pi$ to both sides of equations~\eqref{eq:diff1},\eqref{eq:diff2}, we get
\[
\chi_B^{\vphantom{\pi}} - \chi_B^\pi = (m-2i-1) \sum_{A \in \im\phi} \chi_{A,B^\pi}^\pi, \quad
\chi_{B^\pi}^\pi - \chi_B^\pi = (m-2i) \sum_{A \in \im\phi} \chi_{A,B^\pi}^\pi.
\]
Therefore
\[
\chi_{B^\pi}^\pi = \chi_B^\pi + \frac{m-2i}{m-2i-1} (\chi_B^{\vphantom{\pi}} - \chi_B^\pi) = \frac{m-2i}{m-2i-1} \chi_B^{\vphantom{\pi}} - \frac{1}{m-2i-1} \chi_B^\pi =
-\frac{1}{m-2i} \chi_{B^\pi} + \frac{m-2i+1}{m-2i} \chi_B. \qedhere
\]
\end{proof}

Lemma~\ref{lem:transposition} allows us to come up with a remarkable formula for $\sum_{1\leq i < m} f^{(i\;m)}$, showing that the basis vectors $\chi_B$ are eigenvectors of this operator (cf.~\cite[Proposition 4.2]{Wimmer}, in which $\lambda_m(B)$ is the content of $m$ in the Young tableau of shape $n-|B|,|B|$ having bottom row $B$).

\begin{lemma} \label{lem:transposition-sum}
Let $f \in \pH_n$. For $m \in [n]$,
\[
\sum_{1 \leq i < m} f^{(i\;m)} = \sum_{B \in \cB_n} \lambda_m(B) \hat{f}(B) \chi_B, \quad \text{where } \lambda_m(B) = \begin{cases} i - 2 & \text{if } b_i = m, \\ m - i & \text{if } b_{i-1} < m < b_i, \end{cases}
\]
using the conventions $b_0 = -\infty$ and $b_{d+1} = \infty$ for $B \in \cB_{n,d}$.
\end{lemma}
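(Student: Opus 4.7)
The plan is to prove by induction on $m$ that each basis element $\chi_B$ is an eigenvector of the operator $T_m \colon f \mapsto \sum_{1 \leq i < m} f^{(i\;m)}$ with eigenvalue $\lambda_m(B)$. By linearity in the Young--Fourier expansion this establishes the lemma. The base case $m = 1$ is trivial: $T_1$ is an empty sum, and since every top set satisfies $b_1 \geq 2$, the convention $b_0 = -\infty$ puts $m = 1$ in the gap $b_0 < 1 < b_1$, yielding $\lambda_1(B) = 0$.

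For the inductive step I would exploit the identity $(i\;m+1) = (m\;m+1)(i\;m)(m\;m+1)$, valid in $S_n$ for $i < m$, together with the fact that $f \mapsto f^\pi$ is a representation of $S_n$ on $\pH_n$. This yields the operator recurrence
\[
T_{m+1} f = \bigl(T_m(f^{(m\;m+1)})\bigr)^{(m\;m+1)} + f^{(m\;m+1)}.
\]
Applied with $f = \chi_B$, the task reduces to understanding the action of $(m\;m+1)$ on basis elements, which is exactly what Lemma~\ref{lem:transposition} provides: specializing $\hat f$ to a basis indicator, it expresses $\chi_B^{(m\;m+1)}$ as a linear combination of at most two basis elements, $\chi_B$ and $\chi_{B'}$, where $B' \mathrel{:=} B^{(m\;m+1)}$. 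The inductive hypothesis then evaluates $T_m$ termwise, and a second application of Lemma~\ref{lem:transposition} completes the outer $(m\;m+1)$.

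The verification splits into four cases by $\{m, m+1\} \cap B$. When both or neither of $m, m+1$ lie in $B$, Lemma~\ref{lem:transposition} gives $\chi_B^{(m\;m+1)} = \chi_B$, so the recurrence immediately yields $T_{m+1} \chi_B = (\lambda_m(B) + 1)\chi_B$; the definition of $\lambda$ forces $\lambda_{m+1}(B) = \lambda_m(B) + 1$ in both sub-cases. In the mixed cases, where $b_{i+1} \in \{m, m+1\}$ for some $i$, the coefficients from Lemma~\ref{lem:transposition} take the form $\frac{\pm 1}{m - 2i}$ and $\frac{m - 2i \pm 1}{m - 2i}$.

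The main obstacle is the arithmetic bookkeeping in the mixed cases: one must combine the expansions of $\chi_B^{(m\;m+1)}$ and $\chi_{B'}^{(m\;m+1)}$ (the latter falls into the opposite mixed case) and verify that the $\chi_{B'}$ coefficient in $T_{m+1}\chi_B$ cancels to $0$ while the $\chi_B$ coefficient collapses to $\lambda_{m+1}(B)$. A single boundary subcase occurs when $m+1 \in B$, $m \notin B$, $b_{i+1} = m+1$, and $m = 2i+1$; then $B' \notin \cB_n$ so $\chi_{B'} = 0$, Lemma~\ref{lem:transposition} gives $\chi_B^{(m\;m+1)} = -\chi_B$, and the recurrence yields $T_{m+1}\chi_B = (\lambda_m(B) - 1)\chi_B$, which equals $\lambda_{m+1}(B)\chi_B$ since $\lambda_m(B) = i$ and $\lambda_{m+1}(B) = i - 1$.
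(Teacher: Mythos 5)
Your proposal is correct and follows essentially the same route as the paper: induction on $m$ with base case $\lambda_1(B)=0$, the conjugation identity $(i\;m+1)=(m\;m+1)(i\;m)(m\;m+1)$ giving the recurrence $T_{m+1}f = (T_m(f^{(m\;m+1)}))^{(m\;m+1)} + f^{(m\;m+1)}$, two applications of Lemma~\ref{lem:transposition}, and the same case split on $\{m,m+1\}\cap B$ (including the degenerate subcase $B^{(m\;m+1)}\notin\cB_n$). The "arithmetic bookkeeping" you defer does work out exactly as you claim — the cross coefficient of $\chi_{B'}$ cancels and the diagonal one collapses to $\lambda_{m+1}(B)$ — so nothing is missing beyond writing out that computation.
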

\begin{proof}
The proof is by induction on $m$. When $m = 1$, the sum in question vanishes, and so we need to prove that $\lambda_1(B) = 0$ for all $B \in \cB_n$. Indeed, $b_1 \geq 2$ for all $B \in \cB_n$ and so $\lambda_1(B) = 0$. Suppose now that the formula holds for some $m$. Let $\pi = (m\;m+1)$. Since $(i\;m+1) = \pi (i\;m) \pi$,
\[
\sum_{1 \leq i < m+1} f^{(i\;m+1)} = \bigg(\sum_{1 \leq i < m} (f^\pi)^{(i\;m)}\bigg)^\pi + f^\pi.
\]
Lemma~\ref{lem:transposition} shows that
\begin{align*}
f^\pi = \sum_{\substack{B \in \cB_n\colon \\ m,m+1 \notin B \text{ or} \\ m,m+1 \in B}} &\hat{f}(B) \chi_B \\
+ \sum_{\substack{B^\pi \in \cB_n\colon\\ b_{i+1} = m, m+1 \notin B}} &\left(\frac{1}{m-2i} \hat{f}(B) + \frac{m-2i+1}{m-2i} \hat{f}(B^\pi)\right) \chi_B \\
+ &\left(-\frac{1}{m-2i} \hat{f}(B^\pi) + \frac{m-2i-1}{m-2i} \hat{f}(B)\right) \chi_{B^\pi}.
\end{align*}
The induction hypothesis implies that
\begin{align*}
\sum_{1 \leq i < m} (f^\pi)^{(i\;m)} = \sum_{\substack{B \in \cB_n\colon \\ m,m+1 \notin B \text{ or} \\ m,m+1 \in B}} & \lambda_m(B) \hat{f}(B) \chi_B \\
+ \sum_{\substack{B^\pi \in \cB_n\colon\\ b_{i+1} = m, m+1 \notin B}} & (i-1) \left(\frac{1}{m-2i} \hat{f}(B) + \frac{m-2i+1}{m-2i} \hat{f}(B^\pi)\right) \chi_B \\
+ &(m-i-1) \left(-\frac{1}{m-2i} \hat{f}(B^\pi) + \frac{m-2i-1}{m-2i} \hat{f}(B)\right) \chi_{B^\pi}.
\end{align*}
Another application of Lemma~\ref{lem:transposition} gives
\begin{align*}
\bigg(\sum_{1 \leq i < m} (f^\pi)^{(i\;m)}\bigg)^\pi = \sum_{\substack{B \in \cB_n\colon \\ m,m+1 \notin B \text{ or} \\ m,m+1 \in B}} &\lambda_m(B) \hat{f}(B) \chi_B \\
+ \sum_{\substack{B^\pi \in \cB_n\colon\\ b_{i+1} = m, m+1 \notin B}} &\left[\frac{(m-2i)(m-i-1)-1}{m-2i} \hat{f}(B) - \frac{m-2i+1}{m-2i} \hat{f}(B^\pi)\right] \chi_B \\
+ &\left[-\frac{m-2i-1}{m-2i} \hat{f}(B) + \frac{(m-2i)(i-1)+1}{m-2i} \hat{f}(B^\pi)\right] \chi_{B^\pi}.
\end{align*}
We conclude that
\begin{align*}
\sum_{1 \leq i < m+1} f^{(i\;m+1)} &= \bigg(\sum_{1 \leq i < m} (f^\pi)^{(i\;m)}\bigg)^\pi + f^\pi \\ &=
\sum_{\substack{B \in \cB_n\colon \\ m,m+1 \notin B \text{ or} \\ m,m+1 \in B}} (\lambda_m(B)+1) \hat{f}(B) \chi_B
+ \sum_{\substack{B^\pi \in \cB_n\colon\\ b_{i+1} = m, m+1 \notin B}} (m-i-1) \hat{f}(B) \chi_B + (i-1) \hat{f}(B^\pi) \chi_{B^\pi}.
\end{align*}
It remains to verify that these coefficients match $\lambda_{m+1}(B)$. If $m,m+1 \notin B$ then $\lambda_m(B) = m-i$, where $b_{i-1} < m < b_i$. Since $m+1 \notin B$, also $b_{i-1} < m+1 < b_i$, and so $\lambda_{m+1}(B) = m+1-i = \lambda_m(B) + 1$. If $m,m+1 \in B$, say $b_i = m$ and $b_{i+1} = m+1$, then $\lambda_{m+1}(B) = (i+1)-2 = (i-2)+1 = \lambda_m(B) + 1$. Finally, suppose that $b_{i+1} = m$ and $m+1 \notin B$. In this case, $b_{i+1} < m+1 < b_{i+2}$ and so $\lambda_{m+1}(B) = (m+1)-(i+2) = m-i-1$, while $b^\pi_{i+1} = m+1$ and so $\lambda_{m+1}(B^\pi) = (i+1)-2 = i-1$. All cases match the coefficients in the displayed formula, completing the proof.
\end{proof}

Lemma~\ref{lem:transposition-sum} shows that the elements of $\cY_n$ are eigenvectors of the operators $f \mapsto \sum_{1 \leq i < m} f^{(i\;m)}$ for all $m$. In the terminology of Vershik and Okounkov~\cite{VO}, this makes $\cY_n$ a Gelfand--Tsetlin basis. Srinivasan~\cite{Srinivasan} shows that the Gelfand--Tsetlin basis is unique in our setting, and so the basis he constructs, which is also a Gelfand--Tsetlin basis, is the same as $\cY_n$.

Lemma~\ref{lem:transposition-sum} allows us to give a formula for the $m$th total influence of a function (cf.~\cite[Definition 4.3]{Wimmer}).

\begin{lemma} \label{lem:total-influence}
Let $f \in \pH_n$. For $m \in [n]$,
\[
\sum_{1 \leq i < j \leq m} f^{(i\;j)} = \sum_{B \in \cB_n} \tau_m(B) \chi_B, \quad \text{where } \tau_m(B) = \frac{m(m-1)}{2} - |B \cap [m]|(m+1 - |B \cap [m]|).
\]
Moreover,
\[ \Inf^m[f] = \sum_{B \in \cB_n} \frac{|B \cap [m]|(m+1 - |B \cap [m]|)}{m} \hat{f}(B)^2 c_B \|\chi_{|B|}\|^2. \]
In particular,
\[ \Inf[f] = \sum_{B \in \cB_n} \frac{|B|(n+1-|B|)}{n} \hat{f}(B)^2 c_B \|\chi_{|B|}\|^2. \]
\end{lemma}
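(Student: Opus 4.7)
The plan is to combine Lemma~\ref{lem:transposition-sum} with Parseval's identity (Lemma~\ref{lem:moments}) and the norm formula (Theorem~\ref{thm:norms}). First I would write
\[
\sum_{1 \leq i < j \leq m} f^{(i\;j)} = \sum_{j=1}^m \sum_{1 \leq i < j} f^{(i\;j)} = \sum_{B \in \cB_n} \Lambda_m(B) \hat{f}(B) \chi_B,
\]
where $\Lambda_m(B) := \sum_{j=1}^m \lambda_j(B)$, by applying Lemma~\ref{lem:transposition-sum} once for each $j$ (the $j=1$ summand contributes nothing since $\lambda_1(B)=0$). The first displayed equation of the lemma statement is then equivalent to the identity $\Lambda_m(B) = \tau_m(B)$ (the statement as written is missing a factor of $\hat{f}(B)$, which I would insert).

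To verify $\Lambda_m(B) = \tau_m(B)$, set $k = |B \cap [m]|$ and split $[m]$ into $B \cap [m] = \{b_1,\ldots,b_k\}$ and its complement. The elements in $B$ contribute $\sum_{i=1}^k (i-2) = \tfrac{k(k-3)}{2}$. For $j \in [m] \setminus B$, letting $i_j$ satisfy $b_{i_j-1} < j < b_{i_j}$, one has $\lambda_j(B) = j - i_j$; writing $i_j - 1 = \#\{i \leq k : b_i < j\}$ and swapping summation yields $\sum_{j \in [m] \setminus B} i_j = (m-k)(k+1) + \binom{k+1}{2} - \sum_{i=1}^k b_i$, while $\sum_{j \in [m] \setminus B} j = \binom{m+1}{2} - \sum_{i=1}^k b_i$. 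Adding the two contributions, the $\sum b_i$ terms cancel and routine simplification gives $\tfrac{m(m-1)}{2} - k(m+1-k) = \tau_m(B)$.

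For the influence formula, permutation-invariance of the measure gives $\|f^{(i\;j)}\| = \|f\|$, so $\Inf_{ij}[f] = \tfrac12\|f^{(i\;j)}-f\|^2 = \|f\|^2 - \langle f, f^{(i\;j)}\rangle$. Summing over pairs and substituting the identity just established,
\[
\sum_{1 \leq i < j \leq m} \Inf_{ij}[f] = \binom{m}{2} \|f\|^2 - \Bigl\langle f, \sum_{B} \tau_m(B)\, \hat{f}(B)\, \chi_B \Bigr\rangle.
\]
By Lemma~\ref{lem:moments}, $\langle f,\chi_B\rangle = \hat{f}(B)\|\chi_B\|^2$ and $\|f\|^2 = \sum_B \hat{f}(B)^2 \|\chi_B\|^2$, so the right-hand side collapses to $\sum_B \bigl(\binom{m}{2} - \tau_m(B)\bigr)\, \hat{f}(B)^2 \|\chi_B\|^2 = \sum_B k(m+1-k)\, \hat{f}(B)^2 \|\chi_B\|^2$ with $k = |B \cap [m]|$. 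Dividing by $m$ and invoking $\|\chi_B\|^2 = c_B \|\chi_{|B|}\|^2$ from Theorem~\ref{thm:norms} produces the stated formula for $\Inf^m[f]$; taking $m=n$ forces $k = |B|$ and gives the formula for $\Inf[f]$.

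The only substantive step is the bookkeeping identity $\Lambda_m(B) = \tau_m(B)$, which I expect to be the main place where a careful computation is needed; after that everything reduces to orthogonality and Parseval applied to the Young--Fourier expansion.
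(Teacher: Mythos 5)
Your proposal is correct and follows essentially the same route as the paper: summing Lemma~\ref{lem:transposition-sum} over $j \leq m$, verifying $\sum_{j \leq m} \lambda_j(B) = \tau_m(B)$ by separating the contributions of $j \in B$ and $j \notin B$ (the paper reads off the latter by observing that the values $\lambda_j(B)$ for $j \notin B$ form the sequence $0,1,\ldots,n-|B|-1$, whereas you swap the order of summation, but the computations are equivalent), and then obtaining $\Inf^m[f]$ from $\Inf_{ij}[f] = \|f\|^2 - \langle f, f^{(i\;j)}\rangle$ together with orthogonality and Theorem~\ref{thm:norms}. You are also right that the first display in the statement is missing a factor of $\hat{f}(B)$; that is a typo, and the paper's own proof uses the corrected version.
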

\begin{proof}
Lemma~\ref{lem:transposition-sum} proves the first formula with $\tau_m(B) = \sum_{k=2}^m \lambda_k(B)$. We can write $\lambda_k(B) = \lambda'_k(B) + \lambda''_k(B)$, where $\lambda'_k(B)$ is non-zero when $k \in B$, and $\lambda''_k(B)$ is non-zero when $k \notin B$. It follows from the definition that
\[ \lambda'_{b_1}(B),\lambda'_{b_2}(B),\ldots,\lambda'_{b_{|B|}}(B) = -1,0,\ldots,|B|-2. \]
In particular,
\[ \sum_{i=1}^m \lambda'_i(B) = -1 + \cdots + (|B \cap [m]|-2) = \frac{|B \cap [m]| (|B \cap [m]|-3)}{2}. \]
Consider the sequence $\lambda''_1(B),\ldots,\lambda''_n(B)$, in which we omit $\lambda''_k(B)$ for all $k \in B$. The sequence starts at $\lambda''_1(B) = 1-1 = 0$. Let $\lambda''_k(B) = k-i$, where $b_{i-1} < k < b_i$. If $b_i \neq k+1$ then $\lambda''_{k+1}(B) = (k+1)-i = \lambda''_k(B)+1$. If $b_i = k+1,\ldots,b_{i+\ell-1} = k+\ell$ and $k+\ell+1 \leq n$ then $\lambda''_{k+1}(B) = \cdots = \lambda''_{k+\ell} = 0$ and $\lambda''_{k+\ell+1}(B) = (k+\ell+1)-(i+\ell) = \lambda''_k(B)+1$. In other words,
\[ \lambda''_1(B),\ldots,\lambda''_n(B) = 0,1,\ldots,n-|B|-1. \]
In particular,
\[ \sum_{i=1}^m \lambda''_i(B) = 0 + \cdots + (m-|B \cap [m]|-1) = \frac{(m - |B \cap [m]|) (m - |B \cap [m]| - 1)}{2}. \]
In total,
\begin{align*}
\tau_m(B) &= \frac{|B \cap [m]| (|B \cap [m]| - 3) + (m - |B \cap [m]|) (m - |B \cap [m]| - 1)}{2} \\ &= \frac{m(m-1)}{2}  - |B \cap [m]|(m+1 - |B \cap [m]|),
\end{align*}
completing the proof of the first formula.

In order to compute the $m$th total influence, notice that
\[ \Inf_{ij}[f] = \tfrac{1}{2}\|f^{(i\;j)}-f\|^2 = \tfrac{1}{2}\|f\|^2 + \tfrac{1}{2}\|f^{(i\;j)}\|^2 - \langle f,f^{(i\;j)} \rangle = \|f\|^2 - \langle f,f^{(i\;j)} \rangle, \]
using the exchangeability of the measure. Therefore
\[
\Inf^m[f] = \frac{1}{m} \left(\frac{m(m-1)}{2} \|f\|^2 - \left\langle f,\sum_{1 \leq i < j \leq m} f^{(i\;j)} \right\rangle\right).
\]
The formula for $\Inf^m[f]$ now immediately follows from the orthogonality of the basis (Theorem~\ref{thm:orthogonal}) and the norms stated in Theorem~\ref{thm:norms}.
\end{proof}

As a simple corollary, we obtain a version of Poincar\'e's inequality.

\begin{lemma} \label{lem:poincare}
For any $f \in \pH_{n,d}$ we have
\[ \VV[f] \leq \Inf[f] \leq d \VV[f]. \]
\end{lemma}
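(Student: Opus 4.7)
The plan is to compare, coefficient by coefficient, the formulas for $\Inf[f]$ and $\VV[f]$ that are already available. By Lemma~\ref{lem:total-influence},
\[ \Inf[f] = \sum_{B \in \cB_n} \frac{|B|(n+1-|B|)}{n} \hat{f}(B)^2 c_B \|\chi_{|B|}\|^2, \]
and by Lemma~\ref{lem:moments},
\[ \VV[f] = \sum_{\substack{B \in \cB_n\colon B \neq \emptyset}} \hat{f}(B)^2 c_B \|\chi_{|B|}\|^2. \]
Because $f \in \pH_{n,d}$, only $B$ with $1 \leq |B| \leq d$ contribute. Note also that the $B = \emptyset$ term contributes $0$ to $\Inf[f]$ (since the factor $|B|(n+1-|B|)/n$ vanishes), so its absence from the variance sum is harmless. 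Hence it suffices to show that for every integer $k$ with $1 \leq k \leq d$ (and $k \leq n/2$),
\[ 1 \leq \frac{k(n+1-k)}{n} \leq d. \]

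For the lower bound, the identity
\[ k(n+1-k) - n = (k-1)(n-k) \geq 0 \]
(valid whenever $1 \leq k \leq n$) gives $k(n+1-k)/n \geq 1$. For the upper bound, since $k \geq 1$ we have $n+1-k \leq n$, and since $k \leq d$ we get $k(n+1-k) \leq dn$, i.e.\ $k(n+1-k)/n \leq d$.

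Plugging these term-by-term inequalities into the formula for $\Inf[f]$ and comparing with $\VV[f]$ yields both desired bounds simultaneously. There is no real obstacle here: the only subtlety is noticing that the $B = \emptyset$ discrepancy between the two sums is harmless because its coefficient in $\Inf[f]$ is zero, so the bounds really do follow from the scalar inequality on the coefficients alone.
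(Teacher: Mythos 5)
Your proof is correct and follows essentially the same route as the paper: compare the two sums from Lemma~\ref{lem:moments} and Lemma~\ref{lem:total-influence} term by term, using $|B|(n+1-|B|) - n = (|B|-1)(n-|B|) \geq 0$ for the lower bound and $n+1-|B| \leq n$ together with $|B| \leq d$ for the upper bound. The observation that the $B = \emptyset$ term is harmless is a nice explicit touch that the paper leaves implicit.
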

\begin{proof}
Lemma~\ref{lem:moments} and Lemma~\ref{lem:total-influence} give the formulas
\begin{align*}
\VV[f] &= \sum_{\substack{B \in \cB_n\colon \\ B \neq \emptyset}} \hat{f}(B)^2 c_B \|\chi_{2,\ldots,2|B|}\|^2, \\
\Inf[f] &= \sum_{\substack{B \in \cB_n\colon \\ B \neq \emptyset}} \frac{|B|(n+1-|B|)}{n} \hat{f}(B)^2 c_B \|\chi_{2,\ldots,2|B|}\|^2.
\end{align*}
The left inequality follows from $|B|(n+1-|B|) \geq n$, and the right inequality from $n+1-|B| \leq n$.
\end{proof}

\section{Wimmer--Friedgut theorem} \label{sec:wimmer-friedgut}

For a Boolean function $f$ on the Boolean hypercube $\{0,1\}^n$, the influence of variable $i$ is
\[ \Inf_i[f] = \Pr_{x_1,\ldots,x_n \in \{0,1\}^n} [f(x_1,\ldots,x_n) \neq f(x_1,\ldots,1-x_i,\ldots,x_n)], \]
under the uniform distribution over the hypercube. The total influence of the function is $\Inf[f] = \sum_{i=1}^n \Inf_i[f]$.
Friedgut~\cite{FriedgutJunta} proved that for every $\epsilon > 0$, every Boolean function $f$ is $\epsilon$-close to a function $g$ depending on $2^{O(\Inf[f]/\epsilon)}$ coordinates, that is, $\Pr_{x \in \{0,1\}^n}[f(x) \neq g(x)] \leq \epsilon$.

Friedgut's theorem can be seen as an ``inverse theorem'' corresponding to the easy fact that a Boolean function depending on $d$ coordinates has total influence at most $d$. Since the total influence is bounded by the degree, Friedgut's theorem can also be seen as a strengthening of the result of Nisan and Szegedy~\cite{NisanSzegedy} that a Boolean function of degree $d$ depends on at most $d2^{d-1}$ coordinates.

Wimmer~\cite{Wimmer} proved an analog of Friedgut's theorem for functions on a slice of the Boolean cube. His proof takes place mostly on the symmetric group, and uses properties of Young's orthogonal representation. We rephrase his proof in terms of Young's orthogonal basis for the slice.

The proof relies crucially on a hypercontractivity property due to Lee and Yau~\cite{LeeYau}.
Before stating the property, we need to define the noise operator.

\begin{definition} \label{def:noise}
The \emph{Laplacian} operator on functions $f \in \pH_n$ is given by
\[ Lf = f - \frac{1}{\binom{n}{2}} \sum_{1 \leq i < j \leq n} f^{(i\;j)}. \]
The \emph{noise operator} $H_t$ is given by $H_t = e^{-tL}$.
\end{definition}

The Laplacian corresponds to the Markov chain applying a random transposition $(i\;j)$. Moreover, $L = I-K$ where $K$ is the transition matrix of the Markov chain.
We can expand the noise operator as
\[
H_t = e^{t(K-I)} = e^{-t} \sum_{\ell=0}^\infty \frac{t^\ell}{\ell!} K^\ell.
\]
In words, $H_t$ corresponds to applying $P(t)$ many random transpositions $(i\;j)$, where $P(t)$ is the Poisson distribution with mean $t$.

Lemma~\ref{lem:total-influence} gives a formula for $Lf$ and $H_tf$.

\begin{lemma} \label{lem:noise}
Let $f \in \pH_n$. For every $t$,
\[ Lf = \sum_{B \in \cB_n} \frac{2|B|(n+1-|B|)}{n(n-1)} \hat{f}(B) \chi_B, \quad H_tf = \sum_{B \in \cB_n} \exp\left(-t \frac{2|B|(n+1-|B|)}{n(n-1)}\right) \hat{f}(B) \chi_B. \]
\end{lemma}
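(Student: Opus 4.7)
The plan is to read off the eigenvalues of $L$ and $H_t$ on each basis vector $\chi_B$ and then extend by linearity, using the fact that $\cY_n$ is a basis for $\pH_n$ (Theorem \ref{thm:orthogonal}).

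First I would invoke Lemma \ref{lem:total-influence} with $m = n$, which says
\[ \sum_{1 \leq i < j \leq n} f^{(i\;j)} = \sum_{B \in \cB_n} \tau_n(B) \hat{f}(B) \chi_B, \quad \tau_n(B) = \frac{n(n-1)}{2} - |B|(n+1-|B|). \]
In particular, each basis element $\chi_B$ is an eigenvector of the averaging operator $K = \frac{1}{\binom{n}{2}} \sum_{i<j}(\cdot)^{(i\;j)}$ with eigenvalue $\tau_n(B)/\binom{n}{2} = 1 - \frac{2|B|(n+1-|B|)}{n(n-1)}$. Since $L = I - K$, this immediately gives $L\chi_B = \frac{2|B|(n+1-|B|)}{n(n-1)} \chi_B$. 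Applying this term-by-term to the Young--Fourier expansion $f = \sum_B \hat{f}(B) \chi_B$ yields the stated formula for $Lf$.

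For $H_t = e^{-tL}$, I would observe that $L$ is diagonal in the basis $\cY_n$ with the eigenvalues $\lambda_B = \frac{2|B|(n+1-|B|)}{n(n-1)}$ computed above. Since $\pH_n$ is finite-dimensional, the operator exponential $e^{-tL}$ may be defined either by the norm-convergent power series $\sum_{\ell \geq 0} (-t)^\ell L^\ell / \ell!$ or, equivalently, by diagonalizing: in either case $H_t \chi_B = e^{-t\lambda_B} \chi_B$. Expanding $f$ in the basis and applying $H_t$ term-by-term yields the second formula.

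There is no real obstacle here: the whole content is the computation of the eigenvalues of $K$, which has already been carried out in Lemma \ref{lem:total-influence}. The only mild subtlety worth checking is that the power-series definition of $H_t$ commutes with the diagonalization, which is automatic in finite dimension since the partial sums $\sum_{\ell=0}^N (-tL)^\ell/\ell!$ act as $\sum_{\ell=0}^N (-t\lambda_B)^\ell/\ell!$ on $\chi_B$ and converge to $e^{-t\lambda_B}$ coordinatewise.
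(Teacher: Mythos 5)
Your proposal is correct and is exactly the argument the paper intends: the paper gives no explicit proof, stating only that the lemma follows from Lemma~\ref{lem:total-influence}, and your derivation (read off the eigenvalue of $K$ on $\chi_B$ from $\tau_n(B)/\binom{n}{2}$, pass to $L = I-K$, then exponentiate in the eigenbasis) is the standard way to fill that in. The arithmetic and the remark about the power series in finite dimension are both fine.
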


The hypercontractivity result of Lee and Yau~\cite{LeeYau} gives for all $p < q$ a value of $t$ such that $\|H_t f\|_q \leq \|f\|_p$.

\begin{proposition} \label{pro:lee-yau}
Let $n,k$ be integers such that $1 \leq k \leq n-1$. The log-Sobolev constant $\rho$ of the Markov chain corresponding to the Laplacian $L$ is given by
\[ \rho^{-1} = \Theta\left(n\log \frac{n^2}{k(n-k)}\right). \]
Consequently, for every $t \geq 0$ and $1 \leq p \leq q \leq \infty$ satisfying $\frac{q-1}{p-1} \leq \exp(2\rho t)$ and all $f \in \pH_n$, $\|H_t f\|_q \leq \|f\|_p$.
\end{proposition}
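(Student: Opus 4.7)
The plan is to recognize that this proposition bundles together two results, both of which are standard once one invokes the right framework, so my ``proof'' is really an orchestration of citations. The first step is to identify the Markov chain underlying the Laplacian $L$ from Definition~\ref{def:noise}. Since $L = I - K$, where $K f = \binom{n}{2}^{-1} \sum_{i<j} f^{(i\;j)}$, the chain $K$ is precisely the random-transposition walk on $\binom{[n]}{k}$: each step swaps the contents of a uniformly random pair of coordinates (acting nontrivially only on pairs where one coordinate is $0$ and the other $1$). This is the chain whose log-Sobolev constant Lee and Yau analyze in~\cite{LeeYau}, and I would simply cite their main theorem to obtain the estimate $\rho^{-1} = \Theta(n \log \frac{n^2}{k(n-k)})$. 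One verification is needed here: to make sure the normalization of the Dirichlet form $\mathcal{E}(f,f) = \langle Lf, f \rangle$ matches Lee--Yau's conventions (they may state their result in terms of the generator of the continuous-time chain, so I would compare the two up to factor-$\binom{n}{2}$ rescalings and absorb any such factor into the $\Theta$).

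The second step is to deduce the hypercontractive inequality from the log-Sobolev constant. This is the classical theorem of Gross, originally proved for Gaussian measure and long since extended to the setting of reversible Markov semigroups (see, e.g., Diaconis--Saloff-Coste). The statement I would quote is: if the log-Sobolev inequality
\[ \operatorname{Ent}(f^2) := \EE[f^2 \log f^2] - \EE[f^2] \log \EE[f^2] \leq \frac{2}{\rho} \langle Lf, f \rangle \]
holds for all $f$, then the semigroup $H_t = e^{-tL}$ is hypercontractive with the parameters claimed: $\|H_t f\|_q \leq \|f\|_p$ whenever $\frac{q-1}{p-1} \leq e^{2\rho t}$. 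The passage from the stated scaling of $\rho$ to the stated hypercontractive bound is then purely mechanical.

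The main obstacle, such as it is, is not conceptual but bookkeeping: the factor-of-two conventions in the log-Sobolev inequality, the normalization of the Dirichlet form, and whether the Lee--Yau constant is stated for the discrete-time or continuous-time chain all need to be reconciled so that the final inequality $\frac{q-1}{p-1} \leq \exp(2\rho t)$ comes out with the correct factor. Everything else (positivity and self-adjointness of $L$, which guarantee that $H_t$ is a well-defined contractive semigroup on every $L^p$) is automatic, since $L$ is visibly symmetric with respect to the uniform measure on the slice and its spectrum is nonnegative by Lemma~\ref{lem:noise}. I would therefore present the proof as two lines: ``The first claim is Lee--Yau~\cite{LeeYau}. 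The second follows by Gross's theorem, using that the semigroup $H_t = e^{-tL}$ is symmetric with respect to the uniform measure on $\binom{[n]}{k}$.''
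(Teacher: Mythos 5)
Your proposal matches the paper's proof exactly: the paper cites Lee--Yau (Theorem 5) for the log-Sobolev constant, noting precisely the normalization issues you flag (their time parameter is rescaled by a fraction of $n$ and their log-Sobolev constant is the reciprocal), and cites Gross's theorem (as quoted in O'Donnell--Wimmer) for the hypercontractive consequence. Nothing is missing.
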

\begin{proof}
The first result is~\cite[Theorem 5]{LeeYau}. Their parameter $t$ is scaled by a fraction of $n$. Furthermore, their log-Sobolev constant is the reciprocal of ours. The second result is due to Gross~\cite{Gross}, and is quoted from~\cite[Theorem 2.4]{ODonnellWimmer}.
\end{proof}

We can now state and prove the Wimmer--Friedgut theorem~\cite[Theorem 1.3]{Wimmer}.

\begin{definition} \label{def:depends}
A function $f \in \pH_n$ \emph{depends (only) on a set $S \subseteq [n]$} if $f = f^{(i\;j)}$ whenever $i,j \notin S$.
\end{definition}

\begin{theorem} \label{thm:wimmer}
Let $n,k$ be integers such that $1 \leq k \leq n/2$, and define $p = k/n$. For every Boolean function $f$ on the slice and any $\epsilon > 0$ there exists a Boolean function $g$ depending on $O(p^{-O(\Inf[f]/\epsilon)})$ coordinates such that $\Pr[f \neq g] \leq \epsilon$.
\end{theorem}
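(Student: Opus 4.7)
The plan is to adapt Friedgut's junta theorem to the slice, following Wimmer~\cite{Wimmer} but using our Young--Fourier basis natively rather than lifting to $S_n$. Five stages: low-degree truncation, identifying an important coordinate set $S$, projecting $f$ onto $S$ by averaging, a Bonami-style bound via Lee--Yau hypercontractivity on the low-degree mass off $S$, and Boolean rounding.

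Set $d = C\Inf[f]/\epsilon$ for a large constant $C$. Lemma~\ref{lem:total-influence} gives $\|f - f^{\le d}\|^2 \le 2\Inf[f]/(d+1) \le \epsilon/8$, where $f^{\le d} = \sum_{|B| \le d} \hat f(B)\chi_B$. Apply Lemma~\ref{lem:important} with threshold $\tau$ (tuned later) to obtain $S \subseteq [n]$ of size $O(\Inf[f]/\tau)$ with $\Inf_{ij}[f] < \tau$ for $i,j \notin S$. Write $T = [n]\setminus S$ and let $\tilde f = \EE_\pi[f^\pi]$ averaged over permutations of $T$ (fixing $S$ pointwise). By a permutation-invariant version of Lemma~\ref{lem:averaging} (averaging annihilates precisely the $\chi_B$ with $B \cap T \ne \emptyset$), $\tilde f = \sum_{B\subseteq S} \hat f(B)\chi_B$, which depends only on $S$, and
\[
\|f-\tilde f\|^2 \le \|f - f^{\le d}\|^2 + M \le \epsilon/8 + M, \qquad M := \sum_{\substack{B\not\subseteq S \\ |B|\le d}} \hat f(B)^2 \|\chi_B\|^2.
\]

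The crux is bounding $M$. Choose $t = \Theta(\rho^{-1})$ so Proposition~\ref{pro:lee-yau} gives $\|H_t h\|_2 \le \|h\|_{4/3}$; by duality and Lemma~\ref{lem:noise} (which shows $H_t$ attenuates $\chi_B$ by at most $p^{O(d)}$ when $|B| \le d$), we obtain the slice analog of Bonami--Beckner: $\|h^{\le d}\|_4 \le p^{-O(d)}\|h^{\le d}\|_2$ for any $h \in \pH_n$. Applying H\"older's inequality $\langle g,g^{\le d}\rangle \le \|g\|_{4/3}\|g^{\le d}\|_4$ to $g = f - f^{(i\;j)}$, together with $\|g\|_{4/3}^{4/3} = \Pr[f\ne f^{(i\;j)}] = 2\Inf_{ij}[f]$, yields the pair-influence Bonami bound
\[
\Inf_{ij}[f^{\le d}] \;\le\; p^{-O(d)}\,\Inf_{ij}[f]^{3/2}.
\]
Summing over $i<j$ in $T$ and using $\sqrt{\Inf_{ij}[f]} < \sqrt\tau$ for such pairs,
\[
\sum_{\substack{i<j \\ i,j\in T}} \Inf_{ij}[f^{\le d}] \;\le\; p^{-O(d)}\sqrt\tau\sum_{i<j} \Inf_{ij}[f] \;=\; p^{-O(d)}\sqrt\tau\,n\Inf[f].
\]
On the other hand, by the permutation-invariant version of Lemma~\ref{lem:total-influence}, this sum equals $\sum_B |B\cap T|(|T|+1-|B\cap T|)\hat f(B)^2 c_B \|\chi_{|B|}\|^2$; for every $B \not\subseteq S$ with $|B| \le d$ the weight is at least $|T|/2$ (provided $|T| \ge 2d$), so the sum is at least $(|T|/2)M$. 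We may assume $|S| \le n/2$ (else the theorem is vacuous), so $|T| \ge n/2$ and $M \le p^{-O(d)}\sqrt\tau\,\Inf[f]$. Setting $\tau = \Theta(\epsilon^2 p^{O(d)}/\Inf[f]^2)$ makes $M \le \epsilon/8$ and
\[
|S| = O(\Inf[f]/\tau) = O\bigl(\Inf[f]^3 p^{-O(d)}/\epsilon^2\bigr) = p^{-O(\Inf[f]/\epsilon)}.
\]
Finally, round to the Boolean $g = [\tilde f \ge 1/2]$ on $S$: by Markov, $\Pr[f\ne g] \le 4\|f-\tilde f\|^2 \le \epsilon$.

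\emph{Main obstacle.} The technical heart is the slice Bonami--Friedgut inequality $\|h^{\le d}\|_4 \le p^{-O(d)}\|h^{\le d}\|_2$, obtained from Lee--Yau hypercontractivity and the attenuation spectrum of $H_t$ in Lemma~\ref{lem:noise}. The $\log(1/(p(1-p)))$ factor in the log-Sobolev constant is responsible for the $p^{-O(d)}$ loss (as opposed to the $3^{O(d)}$ of classical Bonami--Beckner), and is the source of the $p^{-O(\Inf[f]/\epsilon)}$ base in the junta size; the polynomial factors $n,\Inf[f],\epsilon^{-1}$ are absorbed into this exponential by enlarging the constant $C$.
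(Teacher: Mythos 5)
Your proposal is correct and follows essentially the same route as the paper: identify the important set $S$ via Lemma~\ref{lem:important}, average over its complement, split the off-$S$ Fourier mass by degree, control the low-degree part with Lee--Yau hypercontractivity, and round. Your reorganization of the hypercontractive step---truncating to degree $\le d$ first and proving $\Inf_{ij}[f^{\le d}] \le p^{-O(d)}\Inf_{ij}[f]^{3/2}$ via a low-degree $(2,4)$ inequality---is the dual of the paper's step, which bounds $\Inf_{ij}[H_tf] \le \sqrt2\,\Inf_{ij}[f]^{3/2}$ directly and then sandwiches $\Inf^{n-m}[H_tf]$ from both sides; the two are equivalent because $H_t$ is self-adjoint and attenuates degree-$\le d$ components by at most $p^{\Theta(d)}$.

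One point needs repair. There is no ``permutation-invariant version'' of Lemma~\ref{lem:averaging} (or of Lemma~\ref{lem:total-influence}) for an arbitrary set $T$ in the fixed basis $\{\chi_B\}$: the basis depends on the ordering of the coordinates, and for $T$ not an initial segment it is false that averaging over permutations of $T$ annihilates exactly the $\chi_B$ with $B\cap T\ne\emptyset$. For instance, with $T=\{1,5\}$ and $B=(3)$ we have $B\cap T=\emptyset$, yet the transposition $(1\;5)$ sends the summand $\chi_{(1),(3)}=x_1-x_3$ of $\chi_B$ to $x_5-x_3=\chi_{(5),(3)}$, whose first index is not below $b_1$, so $\chi_B$ is not invariant under permutations of $T$. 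The fix is exactly what the paper does: since the theorem is invariant under relabelling coordinates, assume without loss of generality that $S=\{n-m+1,\dots,n\}$, so that $T=[n-m]$ is an initial segment and both lemmas apply verbatim. With that one-line change your argument goes through.
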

\begin{proof}
Let $\tau > 0$ be a parameter to be determined. Lemma~\ref{lem:important} shows that there exists a set $S \subseteq [n]$ of size $m = O(\Inf[f]/\tau)$ such that $\Inf_{ij}[f] < \tau$ whenever $i,j \notin S$. Without loss of generality, we can assume that $S = \{n-m+1,\ldots,n\}$. Let $h$ be the function obtained from $f$ by averaging over all permutations of $[n-m]$, and let $g$ be the Boolean function obtained from rounding $h$ to $\{0,1\}$. Note that $h$ and $g$ both depend only on the last $m$ coordinates. Lemma~\ref{lem:averaging} shows that the Young--Fourier expansion of $h$ is obtained from that of $f$ by dropping all terms $\hat{f}(B) \chi_B$ for which $B \cap [n-m] \neq \emptyset$. Therefore Lemma~\ref{lem:moments} shows that
\[ \Pr[f \neq g] = \|f-g\|^2 \leq 2\|f-h\|^2 = 2\sum_{\substack{B \in \cB_n\colon \\ B \cap [n-m] \neq \emptyset}} \hat{f}(B)^2 c_B \|\chi_{|B|}\|^2. \]

We bound the sum on the right-hand side by considering separately large sets and small sets. Let $d \leq n/2$ be a parameter to be determined. For the large sets, we have
\[ \sum_{\substack{B \in \cB_n\colon \\ |B| \geq d}} \hat{f}(B)^2 c_B \|\chi_{|B|}\|^2 \leq \frac{n}{d(n+1-d)} \sum_{\substack{B \in \cB_n\colon \\ |B| \geq d}} \frac{|B|(n+1-|B|)}{n} c_B \|\chi_{|B|}\|^2 = \frac{n}{d(n+1-d)} \Inf[f], \]
using Lemma~\ref{lem:total-influence}.

In order to bound the part corresponding to small sets, we need to apply hypercontractivity.
Choose $(p,q) = (\tfrac{4}{3},2)$ in Proposition~\ref{pro:lee-yau} to deduce that for $t = \Theta(n\log \frac{n^2}{k(n-k)}) = \Theta(n\log p^{-1})$ we have $\|H_tF\|_2 \leq \|F\|_{4/3} = \|F\|_2^{3/2}$ for every function $F$ on the slice which takes the values $0,\pm 1$. In particular, choosing $F = f - f^{(i\;j)}$ we obtain $(2\Inf_{ij}[H_tf])^{1/2} \leq (2\Inf_{ij}[f])^{3/4}$ and so $\Inf_{ij}[H_t f] \leq \sqrt{2} \Inf_{ij}[f]^{3/2}$. Therefore
\[
\Inf^{n-m}[H_t f] \leq \frac{\sqrt{2}}{n-m} \sum_{1 \leq i < j \leq n-m} \Inf_{ij}[f]^{3/2} \leq \frac{\sqrt{2}n}{n-m} \sqrt{\tau} \Inf[f].
\]
On the other hand, Lemma~\ref{lem:total-influence} and Lemma~\ref{lem:noise} show that
\begin{align*}
\Inf^{n-m}[H_t f] &=
\sum_{B \in \cB_n} \exp \left(-t\frac{2|B|(n+1-|B|)}{n(n-1)}\right) \frac{|B \cap [n-m]|(n-m+1-|B \cap [n-m]|)}{n-m} \hat{f}(B)^2 c_B \|\chi_{|B|}\|^2 \\ &\geq
\sum_{\substack{B \in \cB_n\colon \\ |B| < d \\ B \cap [n-m] \neq \emptyset}} \exp \left(-t\frac{2d(n+1-d)}{n(n-1)}\right) \hat{f}(B)^2 c_B \|\chi_{|B|}\|^2 \\ &=
\sum_{\substack{B \in \cB_n\colon \\ |B| < d \\ B \cap [n-m] \neq \emptyset}} p^{\Theta(d)} \hat{f}(B)^2 c_B \|\chi_{|B|}\|^2.
\end{align*}
Altogether,
\[
\sum_{\substack{B \in \cB_n\colon \\ |B| < d \\ B \cap [n-m] \neq \emptyset}} \hat{f}(B)^2 c_B \|\chi_{|B|}\|^2 \leq \sqrt{2} p^{-\Theta(d)} \frac{n}{n-m} \sqrt{\tau} \Inf[f].
\]

Putting both bounds together, we deduce
\[
\Pr[f \neq g] \leq 2\frac{n}{d(n+1-d)} \Inf[f] + 2\sqrt{2} p^{-\Theta(d)} \frac{n}{n-m} \sqrt{\tau} \Inf[f].
\]
We now choose $d = \Inf[f]/(8\epsilon)$ and $\tau = p^{Cd}$ for an appropriate constant $C>0$, so that $m = O(\Inf[f] p^{-O(\Inf[f]/\epsilon)}) = O(p^{-O(\Inf[f]/\epsilon)})$. We can assume that $d,m \leq n/2$, since otherwise the theorem is trivial. We can choose $C$ so that
\[
\Pr[f \neq g] \leq \frac{\epsilon}{2} + O(p^{Cd/3} \Inf[f]) = \frac{\epsilon}{2} + O(2^{-Cd/3} \Inf[f]) = \frac{\epsilon}{2} + O(2^{-Cd/3} d) \frac{\epsilon}{2}.
\]
For an appropriate choice of $C$, the second term is at most $\epsilon/2$, completing the proof.
\end{proof}

\bibliographystyle{plain}
\bibliography{WimmerFriedgut}
\end{document}